\newtheorem{remark}{Remark}
\newcommand{\p}{\partial}
\newcommand{\Grad}{\nabla}
\newcommand{\Div}{\nabla\cdot}
\newcommand{\wtilde}{\widetilde}
\newcommand{\what}{\widehat}
\newcommand{\bq}{\begin{equation}}
\newcommand{\eq}{\end{equation}}
\newcommand{\bt}{\begin{theorem}}
\newcommand{\et}{\end{theorem}}
\newcommand{\bqa}{\begin{eqnarray*}}
\newcommand{\eqa}{\end{eqnarray*}}
\def\pd#1#2{\frac{\partial #1}{\partial#2}}
\def\cK{\mathcal K}
\title{Immersed Finite Element Method for Eigenvalue Problem}
\author{ {Seungwoo Lee}\footnotemark[1] \and {Do Y. Kwak}\footnotemark[1]
        \and Imbo Sim \footnotemark[2]}
\begin{document}

\maketitle
\renewcommand{\thefootnote}{\fnsymbol{footnote}}
\footnotetext[1]{Department of Mathematical Science, Korea Advanced Institute of Science and Technology, 305-701 Daejeon, Republic of Korea.}
\footnotetext[2]{National Institute for Mathematical Sciences, 305-811 Daejeon, Republic of Korea $\quad$ ({\tt imbosim@nims.re.kr}).}

\begin{abstract}
We consider the approximation of elliptic eigenvalue problem with an immersed interface.  The main aim of this paper is to prove the stability and convergence of an immersed finite element method (IFEM) for eigenvalues using Crouzeix-Raviart $P_1$-nonconforming approximation. We show that spectral analysis for the classical eigenvalue problem can be easily applied to our model problem. We analyze the IFEM for elliptic eigenvalue problem with an immersed interface and derive the optimal convergence of eigenvalues. Numerical experiments demonstrate our theoretical results.
\end{abstract}

\begin{keywords}
eigenvalue, finite elements, immersed interface
\end{keywords}

\begin{AMS}
15A15, 15A09, 15A23
\end{AMS}

\pagestyle{myheadings}
\thispagestyle{plain}


\section{Introduction}
In this paper, we consider the approximation of elliptic eigenvalue problem with an immersed interface. The interface problems are often encountered in fluid dynamics, electromagnetics, and materials science. Especially, elastic waves propagating in heterogeneous media with interfaces occur in materials science \cite{Deak-Ahmed, Zhang-Leveque}. Also, electromagnetic problems with different conductivity or permeability often arise in optical waveguides \cite{Badia-Codina, Hiptmair-Li-Zou}. The main difficulty in solving such problems is caused mainly by the non-smoothness of solution across the interface. One choice to overcome it is to use finite element methods based on fitted meshes along the interface. Another choice is to use any meshes independent of interface geometry for the computational domain. In the latter case, LeVeque and Li \cite{LeVeque-Li} introduce the immersed interface method based on the finite difference method where the jump conditions are properly incorporated in the scheme. However, the resulting linear system of equation from this method may not be symmetric and positive definite \cite{Li-Lin-Wu}. On the other hand, the immersed finite element method (IFEM) has been developed where the local basis functions are constructed to satisfy the jump conditions \cite{Li-Lin-Wu} and its variants have been analyzed \cite{Chou-Kwak-Wee,Hou-Liu,Kwak-W-C,Li-Lin-Lin-Rogers}. The related work in this direction can be found in \cite{Chang-Kwak,Gong-Li-Li, Lin-Sheen-Zhang} and references therein.

The purpose of this paper is to prove the stability and convergence of an immersed finite element method for eigenvalues using Crouzeix-Raviart $P_1$-nonconforming approximation \cite{Kwak-W-C}.
As a model problem, we consider the eigenvalue problem with an immersed interface, i.e.
\begin{align}
-\nabla \cdot (\beta \nabla u) &= \lambda u \quad \; \text{in} \quad \;\Omega^{+} \cup \Omega^{-},  \nonumber\\
\, [u]_\Gamma &= 0, \quad \left[\beta\frac{\partial u}{\partial n} \right]_\Gamma = 0,  \label{eq:modelEq} \\
 u &= 0 \qquad \text{on} \quad \partial\Omega \nonumber,
 \end{align}
 where  $\Omega$ is a convex polygonal domain in $\mathbb{R}^2$ which is separated
 into two subdomains $\Omega^+ $ and $\Omega^-$ by a $C^2$-interface $\Gamma = \partial \Omega^- \subset
 \Omega$ with $\Omega^+ = \Omega \setminus \Omega^-$. The symbol $[\,\cdot\,]_\Gamma$ denotes the jump across $\Gamma$. The coefficient $\beta(x)$ is a discontinuous function bounded below and above by two positive constants. For the sake of simplicity, we assume that the coefficient $\beta$ is a positive piecewise constant, that is,
$$\beta(x) = \left\{
\begin{aligned}
\beta^- \quad \text{for} \; x \in \Omega^-, \\
\beta^+ \quad \text{for} \; x \in \Omega^+.
\end{aligned}
\right.$$

The $P_1$-nonconforming FEM is widely used in solving elliptic equations and is shown to be useful in solving the mixed formulation of elliptic problems \cite{Arnold-Brezzi} and the Stokes equations \cite{Crouzeix-Raviart}.  Recently, Kwak et al. \cite{Kwak-W-C} introduced an IFEM based on the piecewise $P_1$-nonconforming polynomials and they proved optimal orders of convergence in the $H^1$ and $L^2$-norm.

There have been various mathematical studies of finite element methods for eigenvalue problems. A unified approach to a posteriori and a priori error analysis for finite element approximations of self-adjoint elliptic eigenvalue problems is presented in \cite{Larson}. The convergence of an adaptive method for elliptic eigenvalue problems is proved in \cite{Giani-Graham}. For a nonconforming approximation, Dari et al. \cite{Dari-Duran-Padra} prove a posteriori error analysis of the eigenvalue. The study of mixed eigenvalue problems can be found in \cite{Boffi2007, Boffi-Brezi-Gastaldi, Mercier-Osborn-Rappaz-Raviart}. To our best knowledge, spectral and convergence analysis of IFEM for eigenvalue problems with immersed interface has not been done so far. It is worth emphasizing that the spectral properties of eigenvalue problems with immersed interface play key roles in the analysis and simulation for more complicated problems, such as fluid-structure interactions, moving interfaces and the numerical stability for PDEs.

In this work, we analyze the IFEM for elliptic eigenvalue problems with immersed interface and derive the optimal convergence of eigenvalues. Furthermore, we show that spectral analysis for the classical eigenvalue problem can be easily applied to our model problem. In particular, the spectral approximation of Galerkin methods can be proved by using fundamental properties of compact operators in Banach space.  Such an investigation originates from a series of papers of Osborn and Babu\v ska \cite{Babuska-Osborn, Osborn}. It has been extended in \cite{Descloux-Nassif-Rappaz1978-1,Descloux-Nassif-Rappaz1978-2}  to estimate Galerkin approximations for noncompact operators. Further application to discontinuous Galerkin approximations has been developed by Buffa et al \cite{Antonietti-Buffa-Perugia}. We formulate the eigenvalue problem with immersed interface in terms of compact operators in order to understand the spectral behavior. The analysis presented in this paper is carried out along the lines of the references \cite{Descloux-Nassif-Rappaz1978-1, Descloux-Nassif-Rappaz1978-2}.

The paper is structured as follows. In the next section, we give a brief review on $P_1$-nonconforming IFEM \cite{Kwak-W-C}. In Section 3, we introduce a modified version of IFEM with an additional term and formulate the eigenvalue problem with the immersed interface. Section 4 contains the analysis of the spectral approximation which is proved to be spurious-free. The approximation is proved by  means of basic results from the theory of compact operator in Banach space.  In section 5 we derive the convergence rate of eigenvalues based on $P_1$-nonconforming IFEM. In the final section, we demonstrate numerical experiments for the model problem which corroborate the theoretical results in the preceding sections.

\newpage

\section{Preliminaries}
We consider an elliptic interface problem corresponding to the model problem (\ref{eq:modelEq}):
\begin{eqnarray} \label{eq}
-\Div(\beta\Grad u) &=& f ~~ \mathrm{ in}~ \;\Omega^{+} \cup \Omega^{-}, \\
\,[u]_\Gamma&=&0, ~~~ \left[\,\beta\pd un\,\right]_\Gamma=0,  \label{flux}\\
u &=& 0  ~~ \mathrm{ on}~  \p\Omega.  \label{BC}
\end{eqnarray}
The weak formulation of the problem (\ref{eq}) - (\ref{BC}) is to find $u\in H^1_0(\Omega)$ such that
\begin{equation} \label{op}
\int_\Omega \beta \nabla u\cdot \nabla v dx = \int_\Omega f v dx , ~~~ \forall v
\in H^1_0(\Omega)
\end{equation}
with $f \in L^2(\Omega)$.
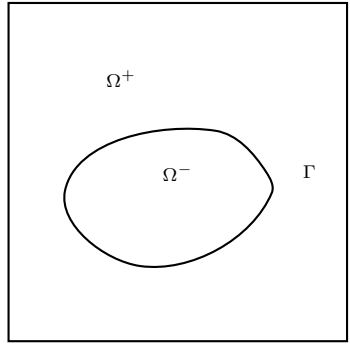
\begin{figure}[ht]
\begin{center}
      \psset{unit=2.5cm}
      \begin{pspicture}(-1,-1)(1,1)
        \pspolygon(0.9,0.9)(-0.9,0.9)(-0.9,-0.9)(0.9,-0.9)
        \psccurve(0.47,0) (0.2,0.22)(-0.6,-0.1)(-0.2,-0.5)(0.5,-0.11)
        \rput(0,0){\scriptsize$\Omega^-$}
        \rput(-0.3,0.5){\scriptsize$\Omega^+$}
        \rput(0.7,0){\scriptsize$\Gamma$}
      \end{pspicture}
\caption{Domain $\Omega$ with interface $\Gamma$} \label{fig:domain1}
\end{center}
\end{figure}

We begin by introducing a Sobolev space which is convenient for describing the regularity of the solution of the elliptic interface problem (\ref{eq}) - (\ref{BC}).
For a bounded domain $D$, we let  $H^m(D) = W^m_2(D)$ be the usual Sobolev space of order $m$ with semi-norm and norm denoted by $|\cdot|_{m,D}$ and  $\|\cdot\|_{m,D}$, respectively.
We define the space
\begin{eqnarray*}
\wtilde{H}^m(\Omega) := \{ u\in H^{m-1}(\Omega)\, :\, u\in H^m(\Omega^s),
s=+,- \}
\end{eqnarray*}
equipped with the norm
\begin{eqnarray*}
\|u\|^2_{\wtilde{H}^m(\Omega)} :=
\|u\|^2_{H^{m-1}(\Omega)}+\|u\|^2_{H^m(\Omega^+)} +
\|u\|^2_{H^m(\Omega^-)},~~ \forall\, u\in\wtilde{H}^m(\Omega).
\end{eqnarray*}
By Sobolev embedding theorem, for any $u \in H^2(\Omega)$, we have $u \in W^1_s(\Omega), \;\forall s > 2$. Then we have following regularity theorem for the weak solution $u$ of
the variational problem (\ref{op}); see \cite{Bramble-King} and \cite{Ladyzenskaja-Rivkind-Uralceva}.
\begin{theorem} \label{thm:reg}
The variational problem (\ref{op}) has a unique
solution $u\in\wtilde{H}^2(\Omega)$ which satisfies for some constant $C>0$
\begin{eqnarray}
\|u\|_{\wtilde{H}^2(\Omega)} \leq C \|f\|_{0,\Omega}. \nonumber
\end{eqnarray}
\end{theorem}

We now describe an immersed finite element method (IFEM) based on Crouzeix-Raviart element \cite{Kwak-W-C}.
 Let $\{\mathcal{K}_h\}$ be the usual quasi-uniform triangulations of the domain $\Omega$ by the triangles of maximum diameter $h$. Note that we do not require an element $K \in \mathcal K_h$ to be aligned with the interface $\Gamma$.
We assume the following situations:
\begin{itemize}
\item the interface intersects the edges of an element at no more than two points 
\item the interface intersects each edge at most once, except possibly it passes through two vertices.
\end{itemize}  For a smooth interface, those assumptions are satisfied if $h$ is sufficiently small.
We call an element $K\in\mathcal{K}_h$ an \textit{interface element} if the interface $\Gamma$
passes through the interior of $K$, otherwise $K$ is a
\textit{non-interface element}.
 We denote by $\mathcal K_h^*$ the collection of all interface elements. We may replace $\Gamma\cap K$ by the line segment joining two intersection points on the edges of each $K\in \cK_h$.

  For each $K\in \mathcal{K}_h$  and non-negative integer $m$, let
\begin{eqnarray*}
\wtilde{H}^m(K) &:=& \{\,u\in L^2(K) : \,u|_{K\cap \Omega^s}\in
H^m(K\cap \Omega^s), s = +,-\,\},
\end{eqnarray*}
equipped with norms
\begin{eqnarray*}
|u|^2_{m,K} &:=& |u|^2_{m,K\cap \Omega^+} + |u|^2_{m,K\cap \Omega^-},\\
\|u\|^2_{m,K} &:=& \|u\|^2_{m,K\cap \Omega^+} + \|u\|^2_{m,K\cap \Omega^-}.
\end{eqnarray*}
  To deal with the interface conditions in the model problem (\ref{eq:modelEq}), we introduce the following spaces,
 \begin{eqnarray*}
\wtilde{H}^2_{\Gamma}(K) &:=&  \left\{\,u\in H^1(K) : \, u|_{K\cap
\Omega^s}\in H^2(K\cap \Omega^s),\,s = +,-~\, \text{and} \left[\beta \pd un\right]_\Gamma = 0 \text{ on } \Gamma\cap K\, \right\},\\
\wtilde{H}^2_{\Gamma}(\Omega) &:=& \left\{\,u\in H^1_0(\Omega) :
 \, u|_{K}\in\wtilde{H}^2_{\Gamma}(K),\,\, \forall K\in \mathcal{K}_h \right\}.
\end{eqnarray*}
Clearly, $\wtilde{H}^2_{\Gamma}(K)$ and $\wtilde{H}^2_{\Gamma}(\Omega)$ are subspace of $\wtilde{H}^2(K)$ and $\wtilde{H}^2(\Omega)$, respectively.

As usual, we  construct local basis functions on each
element $K$ of the triangulation $\mathcal{K}_h$. We let
$$\overline v|_e= \frac1{|e|}\int_{e}{v}\,ds$$ denote the average of a function $v\in H^1(K)$ along an edge $e$.
For a non-interface
element $K\in\mathcal{K}_h$, we simply use the standard linear
shape functions whose degrees of freedom are determined by average values on the edges. Let $N_h(K)$ denote the linear space spanned by the three basis functions $\phi_i$ satisfying $\overline{\phi_i}|_{e_j} = \delta_{ij}$ for $i,j=1,2,3$.
The $P_1$-nonconforming space $N_h(\Omega)$ is given by
$$ N_h(\Omega)= \left\{
\begin{aligned}
 \phi: \phi|&_K\in P_1(K) \mbox{ for } K\in\cK_h\setminus \cK_h^*;\ \mbox{if $K_1,K_2 \in \mathcal K _h$ share an edge $e$,} \\
 &\text{then} \int_{e}{\phi}|_{\partial K_1} ds= \int_{e}{\phi}|_{\partial K_2}  ds; \mbox{ and }
  \int_{\partial K \cap \partial\Omega}{\phi}\,ds=0
\end{aligned}
\right\}.$$

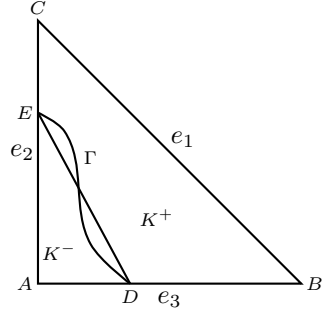
\begin{figure}[ht]
  \begin{center}
    \psset{unit=3.5cm}
    \begin{pspicture}(0,0)(1,1)
      \psset{linecolor=black} \pspolygon(0,0)(1,0)(0,1) \psline(0,0.65)(0.35,0)
      \pscurve(0,0.65)(0.1,0.58)(0.2,0.15)(0.35,0)
      \rput(0,1.05){\scriptsize$C$}
      \rput(-0.05,0){\scriptsize$A$}
      \rput(1.05,0){\scriptsize$B$}
      \rput(0.5,-0.06){$e_3$}
      \rput(0.55,0.55){$e_1$}
      \rput(-0.06,0.5){$e_2$}
      \rput(-0.05,0.65){\scriptsize$E$}
      \rput(0.08,0.12){\scriptsize$K^-$}
      \rput(0.45,0.25){\scriptsize$K^+$}
      \rput(0.35,-0.05){\scriptsize$D$}
      \rput(0.20,0.48){\scriptsize$\Gamma$}
\pnode(-.3,0.6){a}
\pnode(0.12,0.5){b}

\pnode(-.3,.3){a}
\pnode(0.22,0.21){b}
    \end{pspicture}
    \caption{Reference interface triangle} \label{fig:interel}
\end{center}
\end{figure}

Now we consider a reference interface element $K$ and assume that the interface $\Gamma$ intersects the edges of an element $K$ at $D$ and $E$ as in Figure \ref{fig:interel}.
Given a linear function $\phi = V_1\phi_1 + V_2\phi_2 + V_3\phi_3$ on $K$ where  $V_i \in \mathbb{R}$, $\phi_i,\,(i=1,2,3)$ are the
standard basis functions \cite{Crouzeix-Raviart}.
 We construct a new basis function $\hat{\phi}$ which holds the same degrees of freedom as $\phi$. Additionally, the function $\hat{\phi}$ should be linear on $K^+$ and $K^-$, and satisfy the jump conditions in (\ref{flux}).
Since the edge $e_1$ does not intersect the interface, the function $\hat{\phi}$ on the interface element $K$ can be
conveniently described as  follows:
\begin{eqnarray} \label{def:basis}
\hat{\phi} = \left\{
\begin{array}{cc}
    c_1^-\phi_1+ c_2^-\phi_2 + c_3^-\phi_3 & \text{in  $K^-$,}\\
    V_1\phi_1+ c_2^+\phi_2 + c_3^+\phi_3 & \text{in  $K^+$},
\end{array}\right.
\end{eqnarray}
satisfying
\begin{eqnarray}
&&\hat{\phi}^-(D) = \hat{\phi}^+(D),~\hat{\phi}^-(E) =
\hat{\phi}^+(E), \label{conti}\\
&& \frac1{|e_i|}\int_{e_i}{\hat\phi}\,ds=V_i,\ i=2,3,\\
&& \beta^-\pd{\hat{\phi}^-}{n\;\;\,}|_{\overline{DE}} =
\beta^+\pd{\hat{\phi}^+}{n\;\;\,}|_{\overline{DE}} . \label{fconti}
\end{eqnarray}
It turns out that the function $\hat{\phi}$ implies $ \frac1{|e_1|}\int_{e_1}{\hat\phi}\,ds=V_1$ from the second equation in (\ref{def:basis}). The modified
function $\hat{\phi}$ is uniquely determined by (\ref{def:basis}) - (\ref{fconti}) (see \cite{Kwak-W-C}).

We denote by $\what N_h(K)$ the local finite element space on the
interface element $K$ whose basis functions $\hat{\phi}_i,\,
i=1,2,3$ are defined by above construction.
 We define
the {\em immersed finite element space} $\what N_h(\Omega)$ as the collection of functions $\hat\phi \in L^2(\Omega)$ such that
\begin{itemize}
\item$ \hat\phi|_K\in \what N_h(K) \mbox{ if } K\in\cK_h^*$
\item$ \hat{\phi}|_K \in N_h(K) \mbox{ if } K\in \cK_h\setminus \cK_h^*$
\item$  \int_{e}{\hat\phi}|_{\partial K_1} ds= \int_{e}{\hat\phi}|_{\partial K_2}  ds \;\mbox{ if $K_1,K_2 \in \mathcal K _h$ share an edge $e$}$
\item$  \int_{\partial K \cap \partial\Omega}{\hat\phi}\,ds=0.$
\end{itemize}

Let $H_h(\Omega) := H^1_0(\Omega) + \what N_h(\Omega)$  be endowed with the broken $H^1$-norm $\|v\|^2_{1,h} := \sum_{K\in \mathcal{K}_h}\|v\|^2_{1,K}$. Next, we need an interpolation operator.
For any $v\in {H}^1(K)$, $ I_h v\in \what N_h(K)$ is determined by  the average values of $v$ on each edge:
$$\overline{(I_h v)}|_{e_i} = \bar{v}|_{e_i},~~ i=1,2,3. $$ We call $I_h v$ the local \emph{interpolant} of $v$ in $\what N_h(K)$. We naturally extend it to $ {H}^1(\Omega)$ by  $(I_hv)|_{K} =I_h (v|_K)$ for each $K \in \mathcal{K}_h$. Then  we have the following approximation property of the interpolation in $\wtilde{H}_{\Gamma}^2(\Omega)$ \cite{Kwak-W-C}.
\begin{theorem}\label{thm:apperror}
There exists a constant $C>0$ such that
\begin{eqnarray*}
\|v-I_h v\|_{0,\Omega} + h\|v- I_h v\|_{1,h} \leq C h^2
\|v\|_{\wtilde{H}^2(\Omega)}, \quad \forall v \in \wtilde{H}_{\Gamma}^2(\Omega).
\end{eqnarray*}
\end{theorem}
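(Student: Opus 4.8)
The plan is to establish the estimate locally, element by element, and then sum. Writing
$$\|v - I_h v\|_{0,\Omega}^2 = \sum_{K\in\cK_h} \|v - I_h v\|_{0,K}^2, \qquad \|v - I_h v\|_{1,h}^2 = \sum_{K\in\cK_h} \|v - I_h v\|_{1,K}^2,$$
it suffices to prove the local bound $\|v - I_h v\|_{0,K} + h\,|v - I_h v|_{1,K} \le C h^2 \|v\|_{2,K}$ on each $K$, with $C$ independent of $K$ and of how $\Gamma$ meets $K$; summing then yields the global estimate with $\|v\|_{\wtilde H^2(\Omega)}$ on the right.

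For a non-interface element the argument is classical: there $\what N_h(K) = P_1(K)$ and $I_h$ is the Crouzeix--Raviart interpolation defined by the three edge averages, which reproduces $P_1(K)$. Mapping $K$ to the unit reference triangle, using that the edge-average functionals are bounded on $H^1$, and invoking the Bramble--Hilbert lemma yields the estimate with the $H^2$-seminorm on the right; scaling back recovers the powers of $h$.

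The interface elements carry the real difficulty, since there $v$ is only piecewise $H^2$, the space $\what N_h(K)$ consists of functions that are linear on $K^\pm$ and satisfy the continuity and flux-jump conditions across the \emph{line segment} $\overline{DE}$ rather than the curved arc $\Gamma\cap K$, and the cut of $K$ by $\Gamma$ may be arbitrarily unbalanced, so a direct scaling to a fixed reference configuration is unavailable. My strategy is threefold. (i) Using a Sobolev extension operator I extend $v|_{K\cap\Omega^\pm}$ to $\tilde v^\pm \in H^2(K)$ with $\|\tilde v^\pm\|_{2,K} \le C\,\|v\|_{2,K\cap\Omega^\pm}$, the constant uniform by shape regularity and the $C^2$ regularity of $\Gamma$. (ii) I construct a comparison function $w\in\what N_h(K)$ --- the immersed function whose two linear pieces best match $\tilde v^+$ and $\tilde v^-$ --- and show $\|v - w\|_{0,K} + h\,|v-w|_{1,K} \le C h^2 \|v\|_{2,K}$, where the piecewise-linear approximation of $\tilde v^\pm$ supplies the leading order, while the mismatch between the interface conditions $v$ satisfies on $\Gamma$ and those $w$ satisfies on $\overline{DE}$ is absorbed into an $O(h^2)$ geometric term, using that the distance between $\Gamma\cap K$ and $\overline{DE}$ is $O(h^2)$ for a $C^2$ interface together with a trace inequality. (iii) Since the interpolation reproduces the immersed space, $I_h w = w$, so
$$v - I_h v = (v - w) - I_h(v - w),$$
and it remains to bound $I_h(v-w)$ via a local stability estimate $\|I_h z\|_{1,K} \le C\,\|z\|_{1,K}$, uniform in the interface cut, obtained from explicit control of the immersed basis functions $\hat\phi_i$ and their gradients.

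The crux --- and the step I expect to be most delicate --- is obtaining the constants in (ii)--(iii) \emph{uniformly} as the interface approaches a vertex or slices $K$ into very unequal subregions, since in such degenerate configurations the coefficients $c_i^\pm$ defining the immersed basis in (\ref{def:basis})--(\ref{fconti}) can become sensitive to the geometry. A careful examination of the linear system determining $c_i^\pm$, showing that its invertibility stays bounded away from degeneracy, is what makes the whole argument go through.
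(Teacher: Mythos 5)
You should first note that the paper contains no proof of Theorem \ref{thm:apperror} at all: it is imported verbatim from \cite{Kwak-W-C}, so your attempt can only be measured against the proof given in that reference. In outline you have reconstructed its architecture faithfully: classical scaling plus Bramble--Hilbert on non-interface elements, and on interface elements the combination of (a) $H^2$-extensions of $v|_{K\cap\Omega^\pm}$, (b) a comparison function $w\in\what N_h(K)$ whose error absorbs the mismatch between the curved arc $\Gamma\cap K$ and the chord $\overline{DE}$, and (c) the reproduction property $I_h w = w$ together with a stability bound for $I_h$ that is uniform in the position of the interface cut. This is exactly the strategy of \cite{Kwak-W-C} (and of the earlier approximation-capability arguments in \cite{Li-Lin-Lin-Rogers}), and you have correctly located the crux in the uniform behavior of the coefficients $c_i^\pm$ determined by (\ref{def:basis})--(\ref{fconti}).

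Two genuine gaps remain. The first is a step that fails as written: in (iii) you close the argument with the stability estimate $\|I_h z\|_{1,K} \le C\|z\|_{1,K}$, but applied to $z=v-w$ this only gives $\|I_h(v-w)\|_{0,K} \le \|I_h(v-w)\|_{1,K} \le C\|v-w\|_{1,K} \le Ch\|v\|_{2,K}$, i.e.\ you recover the $O(h)$ energy bound but lose the $O(h^2)$ $L^2$ bound that the theorem asserts. To repair this you need an $h$-weighted stability estimate of the form $\|I_h z\|_{0,K} \le C\left(\|z\|_{0,K} + h|z|_{1,K}\right)$, which follows from the trace inequality on edges together with $\|\hat\phi_i\|_{\infty,K}\le C$ --- and that sup-bound itself requires the uniform coefficient control. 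The second gap is that your steps (ii) and (iii) are asserted rather than proven, and they are where essentially all of the work in \cite{Kwak-W-C} lies: showing that the linear system (\ref{conti})--(\ref{fconti}) is uniformly solvable (determinant bounded away from zero independently of where $D$ and $E$ sit on the edges and of the ratio $\beta^-/\beta^+$), and estimating the flux-condition mismatch on the crescent-shaped region between $\Gamma\cap K$ and $\overline{DE}$, whose area is $O(h^3)$, so that $v$ satisfying the jump conditions on $\Gamma$ while $w$ satisfies them on $\overline{DE}$ costs only $O(h^2)$. As a road map your proposal is the right one; as a proof it defers precisely the estimates that constitute the cited reference's contribution, and in one place substitutes a stability bound too weak to deliver the stated $L^2$ rate.
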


\section{Variational formulation} In this section, we consider a variational formulation for the model problem (\ref{eq:modelEq}).
Let $\Omega$,  $\Gamma$ and $\beta$ be the same as in the previous section.
Multiplying $v \in H^1_0(\Omega)$ and integrating by parts in $\Omega^\pm$, we obtain
\begin{align}
\sum_{s=\pm} \int_{\Omega^s} -\nabla\cdot(\beta\nabla u) \cdot v\, dx &= \sum_{s=\pm} \int_{\Omega^s} \beta\nabla u\cdot\nabla v \, dx - \int_{\Gamma}\left[\beta\frac{\partial u}{\partial n}\right] v \, dx \label{eq:weakpro1}\nonumber\\
 &= \int_{\Omega} \beta \nabla u\cdot \nabla v \, dx. \nonumber
\end{align}
Hence the weak formulation of the problem (\ref{eq:modelEq}) is to find the eigenvalues $\lambda \in \mathbb{C}$ and the eigenfunctions $u \in H^1_0(\Omega)$ such that
\begin{equation}
a(u,v) = \lambda(u,v), \quad \forall v \in H^1_0(\Omega),
\label{eq:dweakform}
\end{equation}
where
$$a(u,v) = \int_{\Omega} \beta\nabla u \cdot \nabla v\, dx, \quad \forall u,v \in H^1_0(\Omega).$$
Using the solution operator $T:L^2(\Omega) \rightarrow H^1_0(\Omega)$, the eigenvalue problem (\ref{eq:dweakform}) can be treated as the variational form
\begin{equation} \label{eq:T_operator}
a(Tf, v) = (f,v), \quad \forall v \in H^1_0(\Omega) \nonumber
\end{equation}
with $f\in L^2(\Omega)$.
Note that if $(\lambda, u) \in \mathbb{C}\setminus\{0\} \times H^1_0(\Omega)$ is an eigenpair of (\ref{eq:dweakform}), then $(\lambda^{-1},\, u)$ is an eigenpair for the operator $T$.

For the application of IFEM to eigenvalue problems, we construct IFEM with a penalty term. We start by presenting a modified $P_1$-nonconforming IFEM for the elliptic problem (\ref{eq}) - (\ref{BC}).
For some additional notations, let the collection of all the edges of $K\in \mathcal K_h$ be denoted by $\mathcal{E}_h$. We split $\mathcal{E}_h$ into two disjoint sets $\mathcal{E}_h = \mathcal{E}_h^o\cup \mathcal{E}_h^b$,
where $\mathcal{E}_h^o$ is the set of edges lying in the interior of $\Omega$,
and $\mathcal{E}_h^b$ is the set of edges on the boundary of $\Omega$.

The IFEM (modified by a penalty term)  for  (\ref{op}) is to find  $\hat{u}_h \in \what N_h(\Omega)$ such that
\begin{equation}
a_h^\sigma(\hat{u}_h, \hat{\phi}) =  (f, \hat{\phi}), \quad \forall \hat{\phi} \in \what N_h(\Omega),
 \label{eq:dsweakform}
\end{equation}
where
\begin{eqnarray*}
 a_h^\sigma(u,v) &:=&  a_h(u,v) + j_\sigma(u,v),\\
 a_h(u,v) &:=&  \sum_{K\in \mathcal{K}_h}\int_K \beta \nabla u \cdot \nabla v\, dx, \\
 j_\sigma(u,v) &:=& \sum_{e\in \mathcal E^o_h} \int_{e}\frac \sigma h [u]_{e}[v]_{e}\, ds,
 \mbox { for some } \sigma>0.
\end{eqnarray*}
We define the mesh dependent norm $\|\cdot\|_{1,J}$ on the space $H_h(\Omega)$ by
$$\|v\|^2_{1,J} := \sum_{K \in \mathcal{K}_h} \|v\|^2_{0,K} + \sum_{K \in \mathcal{K}_h} \|\nabla u\|^2_{0,K} + \sum_{e \in \mathcal E^o_h} h^{-1}\|[v]\|^2_{0,e}.$$
By the trace inequality \cite{Brenner-Scott}, this norm is equivalent to  $\|\cdot\|_{1,h}$. The coerciveness and  boundedness of
the bilinear form $ a_h^\sigma(\cdot,\cdot)$ are satisfied.
\begin{lemma}
There exist positive constants $C_b$ and $C_c$ such that
\begin{alignat*}{2}
|a_h^{\sigma}(u,v)| &\leq C_b\|u\|_{1,J}\|v\|_{1,J}, \quad &\forall\, u,v \in H_h(\Omega), \\
a_h^{\sigma}(v,v) &\geq  C_c\|v\|^2_{1,J}, 	 &\forall\, v \in \what N_h(\Omega).
\end{alignat*}
\end{lemma}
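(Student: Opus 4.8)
The plan is to handle boundedness and coercivity separately: the former is a routine pairing of Cauchy--Schwarz inequalities, whereas the latter hinges on a discrete Poincar\'e--Friedrichs inequality, which I expect to be the only real difficulty. Throughout I write $\beta_{\max}:=\max(\beta^-,\beta^+)$ and $\beta_{\min}:=\min(\beta^-,\beta^+)$, both positive by hypothesis.

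For boundedness I would split $a_h^\sigma=a_h+j_\sigma$ and estimate each part. Since $0<\beta\le\beta_{\max}$ pointwise, a Cauchy--Schwarz inequality on each element gives $|a_h(u,v)|\le\beta_{\max}\sum_K\|\nabla u\|_{0,K}\|\nabla v\|_{0,K}$, and a further discrete Cauchy--Schwarz over the elements bounds this by $\beta_{\max}\big(\sum_K\|\nabla u\|_{0,K}^2\big)^{1/2}\big(\sum_K\|\nabla v\|_{0,K}^2\big)^{1/2}\le\beta_{\max}\|u\|_{1,J}\|v\|_{1,J}$. For the penalty, a Cauchy--Schwarz on each edge followed by a discrete one over $\mathcal E_h^o$ yields $|j_\sigma(u,v)|\le\sigma\big(\sum_e h^{-1}\|[u]\|_{0,e}^2\big)^{1/2}\big(\sum_e h^{-1}\|[v]\|_{0,e}^2\big)^{1/2}\le\sigma\|u\|_{1,J}\|v\|_{1,J}$. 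Adding the two estimates gives the first assertion with $C_b=\beta_{\max}+\sigma$.

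For coercivity, setting $u=v$ produces $a_h^\sigma(v,v)=\sum_K\int_K\beta|\nabla v|^2\,dx+\sum_{e\in\mathcal E_h^o}\frac\sigma h\|[v]\|_{0,e}^2$. Using $\beta\ge\beta_{\min}$ this already controls two of the three pieces of $\|v\|_{1,J}^2$, namely the broken gradient energy, bounded below by $\beta_{\min}\sum_K\|\nabla v\|_{0,K}^2$, and the jump seminorm $\sigma\sum_e h^{-1}\|[v]\|_{0,e}^2$. The only missing ingredient is control of the $L^2$ part $\sum_K\|v\|_{0,K}^2=\|v\|_{0,\Omega}^2$, and this is where the main work lies. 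The key step is a discrete Poincar\'e--Friedrichs inequality for the piecewise-$H^1$ space: a constant $C_P$, independent of $h$, with $\|v\|_{0,\Omega}^2\le C_P\big(\sum_K\|\nabla v\|_{0,K}^2+\sum_{e\in\mathcal E_h^o}h^{-1}\|[v]\|_{0,e}^2\big)$ for every $v\in\what N_h(\Omega)$. The boundary contributions that usually appear on the right are absorbed because every $v\in\what N_h(\Omega)$ satisfies $\int_{\partial K\cap\partial\Omega}v\,ds=0$, so each boundary edge has vanishing mean, while the interior jumps are precisely the $[v]_e$ weighted by the penalty. I would prove this inequality by the standard mean-value and scaling arguments for broken Poincar\'e estimates of piecewise-$H^1$ functions; since the IFE basis functions are piecewise linear with continuous edge averages, exactly the same reasoning valid for the Crouzeix--Raviart element transfers to the interface elements without change. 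Granting $C_P$, one obtains $\|v\|_{0,\Omega}^2\le C_P\max(\beta_{\min}^{-1},\sigma^{-1})\,a_h^\sigma(v,v)$, and combining this with the two direct lower bounds yields $a_h^\sigma(v,v)\ge C_c\|v\|_{1,J}^2$ with $C_c$ depending only on $\beta_{\min}$, $\sigma$, and $C_P$, completing the proof.
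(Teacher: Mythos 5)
Your proof is correct, and it actually supplies more than the paper does: the paper states this lemma \emph{without} proof, resting only on the preceding remark that $\|\cdot\|_{1,J}$ is equivalent to the broken norm $\|\cdot\|_{1,h}$ by the trace inequality, and implicitly on the coercivity results for $a_h$ already established in the cited work of Kwak, Wee and Chang. Your boundedness argument is the routine one and is fine, with the explicit constant $C_b=\beta_{\max}+\sigma$. For coercivity, your reduction to a discrete Poincar\'e--Friedrichs inequality over $\what N_h(\Omega)$ is exactly the right key step, and your justification rests on the correct structural facts: functions in $\what N_h(\Omega)$ are in $H^1(K)$ on every element (on an interface element the two linear pieces agree at $D$ and $E$, hence along all of $\overline{DE}$), their jumps across interior edges have zero mean because edge averages match, and they have zero mean on boundary edges. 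These are precisely the hypotheses under which the broken Poincar\'e--Friedrichs inequality for piecewise $H^1$ functions (Brenner, SIAM J.\ Numer.\ Anal.\ 41 (2003)) holds with a constant depending only on shape regularity; since the mean-jump terms on its right-hand side vanish here, you even get $\|v\|_{0,\Omega}^2\le C\sum_K\|\nabla v\|_{0,K}^2$ without needing the penalty contribution at all. It is worth noting that the same zero-mean property is what makes the paper's claimed equivalence $\|\cdot\|_{1,J}\sim\|\cdot\|_{1,h}$ uniform in $h$ (a naive trace inequality alone produces an $h^{-2}$-weighted $L^2$ term), so the paper's implicit route and yours hinge on the same fact; yours has the advantage of being self-contained and of producing explicit constants, while the paper economizes by delegating to known results.
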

The following error estimate for
(\ref{eq:dsweakform}) can be obtained by a slight modification of the proof  in \cite{Kwak-W-C}, by noting that
$j_\sigma(u,v) =0$ for any $u\in {H}^1(\Omega)$ and  $v\in \what N_h(\Omega)$.
\begin{theorem} \label{thm:energyerror}
Let $u\in \wtilde{H}^2(\Omega),~  \hat u_h\in \what N_h(\Omega)$ be the solutions of
(\ref{op}) and (\ref{eq:dsweakform}), respectively. Then there exists a constant $C>0$ such
that
\begin{eqnarray*}
\|u- \hat u_h\|_{0,\Omega}+h\|u- \hat u_h\|_{1,J}\leq C h^2\|u\|_{\wtilde{H}^2(\Omega)}.
\end{eqnarray*}
\end{theorem}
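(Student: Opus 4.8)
The plan is to treat (\ref{eq:dsweakform}) as a nonconforming discretization and run the classical two-step argument: first derive an $O(h)$ estimate in the mesh-dependent norm $\|\cdot\|_{1,J}$ by a second-Strang-lemma computation, and then bootstrap to the $O(h^2)$ bound in $L^2$ by Aubin--Nitsche duality. Almost every ingredient is already in place---the regularity Theorem \ref{thm:reg}, the interpolation estimate Theorem \ref{thm:apperror}, and the coercivity and boundedness of $a_h^\sigma$---so the work reduces to controlling the consistency error caused by the nonconformity and to checking that the penalty term $j_\sigma$ does not spoil it.

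For the energy estimate I would set $w_h := \hat u_h - I_h u \in \what N_h(\Omega)$ and start from coercivity, $C_c\|w_h\|_{1,J}^2 \le a_h^\sigma(w_h,w_h)$. Using (\ref{eq:dsweakform}) to replace $a_h^\sigma(\hat u_h,w_h)$ by $(f,w_h)$ and integrating the exact equation by parts element by element on $K\cap\Omega^+$ and $K\cap\Omega^-$ separately, the interface line integrals cancel thanks to the flux condition $[\beta\,\partial u/\partial n]_\Gamma = 0$ in (\ref{flux}), leaving
\[
a_h^\sigma(w_h,w_h) = a_h^\sigma(u-I_h u,\, w_h) - \sum_{e\in\mathcal{E}_h^o}\int_e \beta\frac{\partial u}{\partial n}\,[w_h]\,ds .
\]
The penalty contribution drops out of the consistency term because $u\in H^1_0(\Omega)$ gives $[u]_e = 0$, which is exactly the remark preceding the statement. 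The first term is bounded by boundedness of $a_h^\sigma$ together with Theorem \ref{thm:apperror}. For the edge sum I would use the defining average property $\int_e[w_h]\,ds = 0$ of the nonconforming space to subtract the edge mean of the flux, then apply the trace inequality and a Bramble--Hilbert scaling on each element to reach $Ch\,\|u\|_{\wtilde H^2(\Omega)}\|w_h\|_{1,J}$. Dividing by $\|w_h\|_{1,J}$ and combining with Theorem \ref{thm:apperror} through the triangle inequality yields $\|u-\hat u_h\|_{1,J}\le Ch\,\|u\|_{\wtilde H^2(\Omega)}$.

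For the $L^2$ bound I would use duality. Let $\psi$ solve the self-adjoint interface problem with right-hand side $u-\hat u_h$; by Theorem \ref{thm:reg}, $\psi\in\wtilde H^2(\Omega)$ with $\|\psi\|_{\wtilde H^2(\Omega)}\le C\|u-\hat u_h\|_{0,\Omega}$. Writing $\|u-\hat u_h\|_{0,\Omega}^2 = (u-\hat u_h,\,u-\hat u_h)$, testing the dual problem against $u-\hat u_h\in H_h(\Omega)$ and integrating by parts element-wise as above, I would split off $a_h^\sigma(\psi-I_h\psi,\,u-\hat u_h)$, which is $O(h)\cdot O(h)$ by Theorem \ref{thm:apperror} and the energy estimate just proved. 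What remains are two consistency integrals---one carrying the factor $h\|u\|_{\wtilde H^2(\Omega)}$ from the primal solution and one carrying $h\|\psi\|_{\wtilde H^2(\Omega)}$ from the dual---whose product is $O(h^2)\,\|u\|_{\wtilde H^2(\Omega)}\,\|u-\hat u_h\|_{0,\Omega}$. Cancelling one power of $\|u-\hat u_h\|_{0,\Omega}$ gives the claimed $O(h^2)$ estimate.

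I expect the consistency estimate on the interface elements to be the main obstacle. On an uncut element this is the textbook Crouzeix--Raviart computation, but on an element split by $\Gamma$ the flux $\beta\,\partial u/\partial n$ is only piecewise smooth, so its edge traces must be estimated through the $\wtilde H^2$-regularity rather than ordinary $H^2$-regularity, performing the scaling argument on $K\cap\Omega^+$ and $K\cap\Omega^-$ separately. The constants must be kept independent of how $\Gamma$ meets $K$, which is where the geometric estimates and the special structure of the immersed basis (linear on each piece with the built-in flux matching) from \cite{Kwak-W-C} are reused; the only genuinely new point is the verification that $j_\sigma(u,\cdot)=0$, so the penalty leaves the consistency error unchanged and the modified method inherits the same rates as the original one.
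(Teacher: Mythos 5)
Your proposal is correct and is essentially the paper's own argument: the paper disposes of this theorem in one line, saying the estimate follows from the proof in \cite{Kwak-W-C} (which is exactly the second-Strang-lemma energy estimate plus Aubin--Nitsche duality you outline, including the interface-element consistency bounds in the $\wtilde{H}^2$-norm) once one observes that $j_\sigma(u,v)=0$ for $u\in H^1(\Omega)$, $v\in \what N_h(\Omega)$, so the penalty term is consistent. You identify that same observation as the only genuinely new ingredient, so your reconstruction matches the intended proof.
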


 The IFEM for the eigenvalue problem (\ref{eq:modelEq}) is to find the pairs $(\lambda_h,\hat{u}_h) \in \mathbb{C} \times \what{N}_h(\Omega)$ such that
 \begin{equation}
a_h^\sigma(\hat{u}_h, \hat{\phi}) =  \lambda_h(\hat{u}_h, \hat{\phi}), \quad \forall \hat{\phi}
\in \what N_h(\Omega).
 \nonumber
\end{equation}
Let us define the discrete solution operator $T_h:L^2(\Omega) \rightarrow \what N_h(\Omega)$ by \begin{equation}
a_h^\sigma(T_h f, \hat\phi) = (f, \hat\phi), \quad \forall \hat\phi \in \what N_h(\Omega)
\label{eq:revariform} \nonumber
\end{equation}
with $f \in L^2(\Omega)$. In view of the definition of the discrete solution operator $T_h$, the eigenvalues $\mu_h$ of the operator $T_h$ are given by $\mu_h =1/ \lambda_h$.

\section{Spectral approximation}
Now we are concerned with the spectral approximation that can be proved by using some properties of compact operators in Banach space. We follow the approaches given in \cite{Descloux-Nassif-Rappaz1978-1,Descloux-Nassif-Rappaz1978-2}.

Clearly, the operator $T$ is self-adjoint and bounded. Similarly the operator $T_h$ is self-adjoint such that
\begin{equation}
a_h^\sigma(T_hf,\phi) =  a_h^\sigma(f,T_h\phi), \quad \forall f,\phi \in \what N_h(\Omega). \nonumber
\end{equation}
Next, the boundedness of the operator $T_h$ can be shown by using the coerciveness of the bilinear form $a_h^\sigma(\cdot, \cdot)$. For any $f\in L^2(\Omega)$, it holds that
\begin{align*}
\|T_h f\|^2_{1,J} &\leq Ca_h^\sigma(T_h f,T_h f) \\
		&= C(f,T_h f) \\
		& \leq C\|f\|_{0,\Omega}\|T_h f\|_{0,\Omega} \\
		&\leq C\|f\|_{0,\Omega}\|T_h f\|_{1,J}.
\end{align*}
Therefore, $\|T_h f\|_{1,J} \leq C\|f\|_{0,\Omega} $.

The operator $T$ is compact in $H^1_0(\Omega)$ due to the boundedness of $T$ and Rellich-Kondrachov theorem i.e. the compact embedding $H^1_0(\Omega) \subset L^2(\Omega)$ \cite{Adams-Fournier}. Clearly, the operator $T_h$ is compact in $H_h(\Omega)$ by the definition of $T_h$.

Let $\sigma(T)$ and $\rho(T)$ be the spectrum and resolvent set of $T$, respectively. The spectrum $\sigma(T)$ is a countable set with no accumulation points different from zero and consists of positive real eigenvalues with finite multiplicity. The algebraic multiplicity of each eigenvalue $\mu \in \sigma(T)$ is equal to the geometric multiplicity due to the self-adjointness and compactness of the operator $T$ \cite{Kato}. For any $z\in \rho(T)$, the resolvent operator $R_z(T)$ is defined by $R_z(T) = (z-T)^{-1}$ from $L^2(\Omega)$ to $L^2(\Omega)$ or from $H^1_0(\Omega)$ to $H^1_0(\Omega)$. Following the references \cite{Descloux-Nassif-Rappaz1978-1,Descloux-Nassif-Rappaz1978-2}, we prove the non-pollution of the spectrum $\sigma(T)$. To do so, we need the following results.
\begin{lemma}
For $z\in \rho(T)$, $z\neq 0$, there is a constant $C>0$ depending on only $\Omega$ and $|z|$ such that
$$\|(z-T)f\|_{1,J} \geq C\|f\|_{1,J}, \quad \forall f \in H_h(\Omega).$$
\label{lem:contiresol}
\end{lemma}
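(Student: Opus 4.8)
The plan is to exploit the fact that $z$ lies in the resolvent set of the continuous operator $T$, so $z-T$ is boundedly invertible on the continuous space, and then to transfer this invertibility estimate to $H_h(\Omega)$ by controlling the difference between $T$ and $T_h$. The key observation is that $\rho(T)$ being bounded away from $\sigma(T)$ gives a uniform lower bound $\|(z-T)g\|_{1,J}\ge c\|g\|_{1,J}$ for all $g$ in the continuous space $H^1_0(\Omega)$; the content of the lemma is that the \emph{same} bound (with a possibly different constant) survives on the larger space $H_h(\Omega)$, where $T$ must be understood as acting through the solution operator but $f$ itself need no longer be continuous.

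First I would split any $f\in H_h(\Omega)$ and estimate $\|(z-T)f\|_{1,J}$ from below by a triangle-inequality argument. Since $T$ is compact and self-adjoint on $H^1_0(\Omega)$ and $z\in\rho(T)$ with $z\neq 0$, the resolvent $R_z(T)=(z-T)^{-1}$ is bounded as a map from $H^1_0(\Omega)$ to $H^1_0(\Omega)$, so on the continuous part we immediately get $\|(z-T)v\|_{1,J}\ge \|R_z(T)\|^{-1}\|v\|_{1,J}$ for $v\in H^1_0(\Omega)$, with the constant depending only on $\Omega$ and $|z|$ through the distance from $z$ to $\sigma(T)$. The next step is to account for the fact that $f\in H_h(\Omega)=H^1_0(\Omega)+\what N_h(\Omega)$ may have a genuinely nonconforming component: here I would use that $\|\cdot\|_{1,J}$ is equivalent to $\|\cdot\|_{1,h}$ (stated after the definition of the norm) and that $Tf$ is well-defined and bounded, $\|Tf\|_{1,J}\le C\|f\|_{0,\Omega}\le C\|f\|_{1,J}$, so the operator $T$ is bounded on $(H_h(\Omega),\|\cdot\|_{1,J})$. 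Combining boundedness of $T$ with the invertibility of $z-T$ on the dense conforming subspace lets me push the lower bound to all of $H_h(\Omega)$.

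The main obstacle will be justifying the lower bound on the nonconforming part, because the resolvent estimate is only given \emph{a priori} on $H^1_0(\Omega)$, whereas $f$ ranges over $H_h(\Omega)$; the naive inequality $\|(z-T)f\|\ge |z|\,\|f\|-\|Tf\|$ is useless since $\|Tf\|$ is not small relative to $\|f\|$ in general. The clean way around this is a contradiction/compactness argument: suppose no uniform constant exists, so there is a sequence $f_n\in H_h(\Omega)$ with $\|f_n\|_{1,J}=1$ and $\|(z-T)f_n\|_{1,J}\to 0$. Since $T$ is compact on $H_h(\Omega)$, a subsequence of $Tf_n$ converges, whence $z f_n=(z-T)f_n+Tf_n$ converges and (as $z\ne0$) $f_n\to f$ for some limit $f$ with $\|f\|_{1,J}=1$; passing to the limit gives $(z-T)f=0$, i.e. $z\in\sigma(T)$, contradicting $z\in\rho(T)$. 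This yields the desired constant, depending only on $\Omega$ and $|z|$ via the spectral gap, and completes the argument without any delicate quantitative tracking of the nonconforming defect.
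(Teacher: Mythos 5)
There is a genuine gap: your contradiction/compactness argument produces a constant with no control on its dependence on $h$, whereas the lemma asserts (and the paper essentially needs) a constant depending only on $\Omega$ and $|z|$. Both the space $H_h(\Omega)=H^1_0(\Omega)+\what N_h(\Omega)$ and the norm $\|\cdot\|_{1,J}$ (whose jump terms $\sum_{e\in\mathcal E_h^o}h^{-1}\|[v]\|^2_{0,e}$ run over the edges of the mesh $\mathcal K_h$) change with $h$. Your argument fixes one such space, normalizes $\|f_n\|_{1,J}=1$, and uses compactness of $T$ on $H_h(\Omega)$; this is legitimate for a single $h$ (indeed $T$ is compact there, since on a fixed mesh the broken-$H^1$ embedding into $L^2(\Omega)$ is compact), but a soft contradiction argument inside one fixed space cannot quantify the resulting constant, and in particular cannot show it is uniform in $h$. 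This uniformity is not cosmetic: in Theorem \ref{thm:disResol} the lemma is combined with $\|(T-T_h)f\|_{1,J}\leq C_2h\|f\|_{1,J}$ to obtain $\|(z-T_h)f\|_{1,J}\geq (C_1(|z|)-C_2h)\|f\|_{1,J}$, which is vacuous unless $C_1$ is $h$-independent. If you instead run the contradiction across a sequence of meshes $h_n\to 0$, the compactness you invoke evaporates: the broken norms are no longer comparable across meshes, only $L^2$-compactness of $T$ survives, and then the normalization $\|f_n\|_{1,J}=1$ need not pass to the limit (the $L^2$-limit could be zero), so you cannot exhibit a nonzero eigenfunction without additional work (e.g.\ a case split according to whether $\|f_n\|_{0,\Omega}$ stays bounded below, the degenerate case being handled by $\|Tf_n\|_{1,J}\leq C\|f_n\|_{0,\Omega}$). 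Your intermediate suggestion of pushing the bound from a ``dense conforming subspace'' also fails: $H^1_0(\Omega)$ is a proper \emph{closed} subspace of $H_h(\Omega)$ (the functions with vanishing jumps), not a dense one.

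The paper avoids all of this with a short quantitative argument whose constants come only from the continuous problem, hence are automatically $h$-independent. Setting $g=(z-T)f$, one observes that $w:=zf-g=Tf$ lies in $H^1_0(\Omega)$ (conforming even though $f$ is not) and, by the definition of $T$, solves the shifted problem $a(w,v)-\tfrac1z(w,v)=\tfrac1z(g,v)$ for all $v\in H^1_0(\Omega)$. Since $z\in\rho(T)$, the number $1/z$ is not an eigenvalue, so this continuous problem is well posed and $\|w\|_{1,J}\leq \tfrac{C}{|z|}\|g\|_{0,\Omega}\leq \tfrac{C}{|z|}\|g\|_{1,J}$. The triangle inequality applied to $f=\tfrac1z(w+g)$ then gives $\|f\|_{1,J}\leq C(|z|)\|g\|_{1,J}$. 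Note the key structural point your proposal misses: the nonconforming part of $f$ never has to be estimated at all, because it is carried verbatim by $g$, which sits on the right-hand side of the bound.
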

\begin{proof}
Let $g = (z- T)f$. We need to show $\|f\|_{1,J} \leq C\|g\|_{1,J}$. From the definition of $T$ and $g$, we have the equalities,
\begin{equation}
a(Tf, v) = a(zf - g, v) = (f,v), \quad \forall v\in H^1_0(\Omega).
\label{eq:lemweak1}
\end{equation}
Reformulating the second equality in (\ref{eq:lemweak1}), we obtain
\begin{equation}
a(zf-g,v) - \frac{1}{z}(zf-g,v) = \frac{1}{z}(g,v), \quad \forall v\in H^1_0(\Omega) .
\label{eq:lemweak}
\end{equation}
Since $z \in \rho(T)$, the inverse $z^{-1}$ is not an eigenvalue of $a(\cdot, \cdot)$. Hence $zf -g$ is the solution of the weak formulation (\ref{eq:lemweak}). By using Theorem \ref{thm:reg}, we have
\begin{equation}
\|zf - g\|_{1,J} \leq C\frac{1}{|z|}\|g\|_{0,\Omega} \leq C\frac{1}{|z|}\|g\|_{1,J}.
\label{eq:contiresol_eq1}
\end{equation}
From the triangle inequality and (\ref{eq:contiresol_eq1}), it follows immediately that
\begin{align*}
\|f\|_{1,J} &\leq \frac{1}{|z|}(\|zf - g\|_{1,J} + \|g\|_{1,J}) \\
 &\leq \frac{1}{|z|}(C\frac{1}{|z|}\|g\|_{1,J} + \|g\|_{1,J}) \\
 & \leq C(|z|) \|g\|_{1,J}\; ,
\end{align*}
where $C(|z|)$ is a constant depending on $|z|$.
\end{proof}
\newline

\begin{theorem}
For $z\in \rho(T)$, $z\neq 0$, there is a constant $C>0$ depending only on $\Omega$ and $|z|$ such that for $h$ small enough
$$\|(z-T_h)f\|_{1,J} \geq C\|f\|_{1,J}, \quad \forall f\in H_h(\Omega).$$
In other words, the resolvent operator $R_z(T_h) = (z-T_h)^{-1}$ is bounded.
\label{thm:disResol}
\end{theorem}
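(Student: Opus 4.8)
The plan is to treat $z-T_h$ as a perturbation of $z-T$ and to borrow the continuous resolvent bound of Lemma \ref{lem:contiresol}. I would start from the algebraic identity
$$(z-T_h)f = (z-T)f + (T-T_h)f, \qquad f\in H_h(\Omega),$$
and apply the reverse triangle inequality in the $\|\cdot\|_{1,J}$ norm to get
$$\|(z-T_h)f\|_{1,J} \geq \|(z-T)f\|_{1,J} - \|(T-T_h)f\|_{1,J}.$$
By Lemma \ref{lem:contiresol}, valid precisely for $f\in H_h(\Omega)$, the first term is bounded below by $C_1\|f\|_{1,J}$ with $C_1=C_1(\Omega,|z|)$. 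Hence the whole argument reduces to showing that the consistency term $\|(T-T_h)f\|_{1,J}$ is small, uniformly in $f$, once $h$ is small.

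To estimate that term I would use that $Tf$ and $T_hf$ are, by definition, the solutions of the continuous problem (\ref{op}) and the discrete problem (\ref{eq:dsweakform}) with the same data $f\in H_h(\Omega)\subset L^2(\Omega)$. The energy error estimate of Theorem \ref{thm:energyerror} followed by the elliptic regularity of Theorem \ref{thm:reg} then yields
$$\|(T-T_h)f\|_{1,J} = \|Tf - T_hf\|_{1,J} \leq C_2 h\,\|Tf\|_{\wtilde H^2(\Omega)} \leq C_2 h\,\|f\|_{0,\Omega},$$
with $C_2=C_2(\Omega)$. Since $\|f\|_{0,\Omega}$ is one of the summands defining $\|f\|_{1,J}$, we have $\|f\|_{0,\Omega}\leq\|f\|_{1,J}$, and therefore $\|(T-T_h)f\|_{1,J}\leq C_2 h\,\|f\|_{1,J}$ for every $f\in H_h(\Omega)$.

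Combining the two bounds gives $\|(z-T_h)f\|_{1,J}\geq (C_1 - C_2 h)\|f\|_{1,J}$. Choosing $h$ small enough that $C_2 h\leq \tfrac12 C_1$ — a threshold depending only on $\Omega$ and $|z|$ through $C_1$ and $C_2$ — produces the desired estimate with constant $\tfrac12 C_1$. Finally, the lower bound shows $z-T_h$ is injective; together with the compactness (indeed finite rank) of $T_h$ on $H_h(\Omega)$ noted above, the Fredholm alternative gives surjectivity, so $R_z(T_h)=(z-T_h)^{-1}$ exists and is bounded by $2/C_1$. The only real obstacle is the consistency estimate, but it is already packaged in Theorem \ref{thm:energyerror}; the care required is merely to track that the constants $C_1$ and $C_2$ depend only on the claimed quantities, so that the smallness threshold on $h$ is uniform over $f$.
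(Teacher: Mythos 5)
Your proof is correct and follows essentially the same route as the paper: the reverse triangle inequality applied to $(z-T_h)f = (z-T)f + (T-T_h)f$, Lemma \ref{lem:contiresol} for the lower bound on the continuous resolvent term, and Theorem \ref{thm:energyerror} (with the regularity bound of Theorem \ref{thm:reg} and $\|f\|_{0,\Omega}\leq\|f\|_{1,J}$) to get $\|(T-T_h)f\|_{1,J}\leq C_2 h\|f\|_{1,J}$, then taking $h$ small. Your closing remark on injectivity plus the Fredholm alternative for surjectivity is a detail the paper leaves implicit, but the core argument is identical.
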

\begin{proof}
By Theorem \ref{thm:energyerror} and Lemma \ref{lem:contiresol}, we get
\begin{align*}
\|(z-T_h)f\|_{1,J} &\geq \|(z-T)f\|_{1,J} - \|(T-T_h)f\|_{1,J} \\
				&\geq (C_1(|z|) - C_2 h)\|f\|_{1,J} \\
				&\geq C(|z|)\|f\|_{1,J},
\end{align*}
for $h$ small enough.
\end{proof}
\newline

Before we state the following Corollary, we denote an operator norm $\|L\|_{\mathscr{L}(X,Y)}$ for a bounded linear operator $L : X \to Y$ by
\begin{equation}
\|L\|_{\mathscr{L}(X,Y)} = \sup_{x \in X} \frac{\|Lx\|_Y}{\|x\|_X}. \label{eq:oper-norm}
\end{equation}
\begin{corollary}
Let $F \subset \rho(T)$ be closed, then
$$\|R_z(T_h)\|_{\mathscr{L}(H_h(\Omega), H_h(\Omega))} \leq C, \quad \forall z \in F,$$
for some constant $C$.
\label{lem:disResolBdd}
\end{corollary}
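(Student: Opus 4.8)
The plan is to upgrade the pointwise resolvent bound of Theorem \ref{thm:disResol} to one that is uniform over all of $F$ and, implicitly, over $h$ small. Theorem \ref{thm:disResol} asserts, for each fixed $z\in\rho(T)$ with $z\neq 0$, an estimate $\|(z-T_h)f\|_{1,J}\geq C(z)\|f\|_{1,J}$ for all $f\in H_h(\Omega)$ and all $h$ below some threshold $h_z$; this is equivalent to $\|R_z(T_h)\|_{\mathscr{L}(H_h(\Omega),H_h(\Omega))}\leq 1/C(z)$. Inspecting that proof, the constant factors as $C(z)=C_1(z)-C_2 h$, where $C_1(z)$ is the continuous resolvent constant of Lemma \ref{lem:contiresol} and $C_2$ (from Theorem \ref{thm:energyerror} together with the regularity bound of Theorem \ref{thm:reg}) is independent of $z$. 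Thus everything reduces to the single claim $c_0:=\inf_{z\in F}C_1(z)>0$: once this holds, any $h\leq c_0/(2C_2)$ gives $C(z)\geq c_0/2$ for every $z\in F$ simultaneously, so that $\|R_z(T_h)\|\leq 2/c_0=:C$, with $C$ independent of both $z\in F$ and $h$.

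The geometric input behind $c_0>0$ is the compactness of the spectrum. Since $T$ is compact and self-adjoint, $\sigma(T)$ consists of positive eigenvalues accumulating only at $0$, together with $0$ itself, hence $\sigma(T)$ is a compact subset of $\mathbb{C}$. As $F$ is closed, $F\cap\sigma(T)=\emptyset$, and one of the two disjoint sets is compact, we obtain $d:=\mathrm{dist}(F,\sigma(T))>0$. Because $0\in\sigma(T)$, this also yields $|z|\geq\mathrm{dist}(z,\sigma(T))\geq d$ for every $z\in F$, so the hypothesis $z\neq 0$ of Theorem \ref{thm:disResol} holds uniformly. I would then track the $z$-dependence of the constant in Lemma \ref{lem:contiresol}: the solvability and regularity estimate (\ref{eq:contiresol_eq1}) for the shifted problem (\ref{eq:lemweak}) degrades only through the factor $1/|z|$ and through $\mathrm{dist}(z,\sigma(T))^{-1}$, both of which are controlled on $F$, the only remaining danger being the behavior of $C_1(z)$ as $|z|\to\infty$.

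To close this I would split $F=F_b\cup F_u$ with $F_b=F\cap\{|z|\leq 2M\}$ and $F_u=F\cap\{|z|>2M\}$, where $M:=\sup_h\|T_h\|_{\mathscr{L}(H_h(\Omega),H_h(\Omega))}$ is finite by the uniform boundedness of $T_h$ established just before the corollary. On $F_u$ the elementary Neumann-series bound applies without reference to Theorem \ref{thm:disResol}: $(z-T_h)$ is invertible with $\|R_z(T_h)\|\leq(|z|-M)^{-1}\leq M^{-1}$, uniformly in $z$ and $h$. On $F_b$, which is compact, the constant $C_1(z)$ of Lemma \ref{lem:contiresol} is bounded below by a positive function of $|z|$ and $\mathrm{dist}(z,\sigma(T))$, whose arguments range over the compact subintervals $[d,2M]$ and $[d,\infty)$ of $(0,\infty)$; hence $\inf_{z\in F_b}C_1(z)=:c_0>0$. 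Combining the two pieces gives the desired uniform bound.

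The main obstacle is the middle step: making precise that the constant of Lemma \ref{lem:contiresol} depends on $z$ \emph{only} through $|z|$ and $\mathrm{dist}(z,\sigma(T))$, and that this dependence stays bounded below on $F$. This requires re-examining the regularity argument behind (\ref{eq:contiresol_eq1}) for the shifted problem (\ref{eq:lemweak}), since Theorem \ref{thm:reg} as stated applies to the unshifted operator; one must absorb the zeroth-order term $z^{-1}(w,v)$ using that $z^{-1}$ is a positive distance from the Dirichlet eigenvalues of $a(\cdot,\cdot)$, and verify that this distance is bounded below by a function of $d$ and $|z|$ on $F$. Once this dependence is pinned down, the compactness of $\sigma(T)$ and of $F_b$, together with the decay of $R_z(T_h)$ for large $|z|$, deliver the uniform bound automatically.
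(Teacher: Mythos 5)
Your proposal follows what is evidently the paper's intended route (the paper states this Corollary with no proof at all, as a ``direct consequence'' of Theorem \ref{thm:disResol}), but it is substantially more careful, and the extra care is warranted. You correctly isolate the structure $C(z)=C_1(z)-C_2h$ with $C_2$ independent of $z$, so that the whole issue is a positive lower bound for $C_1(z)$ on $F$ together with a $z$-uniform smallness threshold for $h$ --- a point the paper silently skips, since Theorem \ref{thm:disResol} alone gives a threshold $h_z$ and a constant that both depend on $z$. You also catch a genuine inaccuracy in the paper: Lemma \ref{lem:contiresol} and Theorem \ref{thm:disResol} claim constants depending ``only on $\Omega$ and $|z|$'', which cannot be literally true, because the resolvent blows up as $z$ approaches an eigenvalue along a circle of fixed radius; the constant must involve $\mathrm{dist}(z,\sigma(T))$, exactly as you say. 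Finally, your Neumann-series treatment of the unbounded part of $F$ is a real addition, since the statement allows unbounded closed $F$ even though the paper only ever applies it with $F$ a compact Jordan curve $\Lambda$.

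The one step you leave open --- a positive lower bound for $C_1(z)$ --- closes more easily than your outline suggests, and without revisiting elliptic regularity for the shifted problem \eqref{eq:lemweak}. Since $T$ is self-adjoint on $L^2(\Omega)$, the spectral theorem gives $\|R_z(T)\|_{\mathscr{L}(L^2(\Omega),L^2(\Omega))}=1/\mathrm{dist}(z,\sigma(T))\leq 1/d$ for every $z\in F$, where $d=\mathrm{dist}(F,\sigma(T))>0$ as in your argument. Now rerun the paper's proof of Lemma \ref{lem:contiresol}: with $g=(z-T)f$ and $w=Tf=zf-g$, the definition of $T$ gives $a(w,v)=(f,v)$ for all $v\in H^1_0(\Omega)$, hence $\|w\|_{1,J}\leq C\|f\|_{0,\Omega}\leq Cd^{-1}\|g\|_{0,\Omega}$ because $f=R_z(T)g$; therefore
\begin{equation*}
\|f\|_{1,J}\leq \frac{1}{|z|}\bigl(\|w\|_{1,J}+\|g\|_{1,J}\bigr)\leq \frac{Cd^{-1}+1}{|z|}\,\|g\|_{1,J},
\qquad\text{i.e.}\qquad C_1(z)\geq \frac{|z|\,d}{C+d}\geq \frac{d^2}{C+d},
\end{equation*}
using $|z|\geq d$ (which holds since $0\in\sigma(T)$, as you noted). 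This bound is uniform over \emph{all} of $F$, so the splitting $F=F_b\cup F_u$ becomes unnecessary, although your large-$|z|$ Neumann-series bound is correct and does no harm. With this substitution your argument is complete and supplies the proof the paper omits.
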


The following result is a direct consequence of Corollary \ref{lem:disResolBdd}. We note that the proof is analogous to Theorem 1 in \cite{Descloux-Nassif-Rappaz1978-1}.
\begin{theorem}
(Non-pollution of the spectrum) Let $A \subset \mathbb{C}$ be an open set containing $\sigma(T)$. Then for sufficiently small $h$, $\sigma(T_h) \subset A$.
\label{thm:nonpollspec}
\end{theorem}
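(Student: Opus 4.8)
The plan is to argue by contradiction on the complement of $A$, after first reducing the question to a \emph{compact} set on which Corollary~\ref{lem:disResolBdd} can be invoked. The first step is to record an $h$-uniform bound on the spectral radius of $T_h$. The boundedness computation preceding Lemma~\ref{lem:contiresol} gives $\|T_h f\|_{1,J}\le C\|f\|_{0,\Omega}\le C\|f\|_{1,J}$, since the $\|\cdot\|_{1,J}$-norm dominates the $L^2$-norm; hence $\|T_h\|_{\mathscr{L}(H_h(\Omega),H_h(\Omega))}\le C$ with $C$ independent of $h$. Consequently every $\mu_h\in\sigma(T_h)$ satisfies $|\mu_h|\le C$, so $\sigma(T_h)$ lies in the closed disc $\overline{B(0,C)}$; and because $T_h$ has finite rank (its range sits in the finite-dimensional $\what N_h(\Omega)$), its nonzero spectrum is a finite set, while the eigenvalue $0$ causes no trouble since $0\in\sigma(T)\subset A$.

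The second step localizes the region where a spurious eigenvalue could appear. I would set $F:=(\mathbb{C}\setminus A)\cap\overline{B(0,C)}$. As the intersection of a closed set with a compact disc, $F$ is compact, and since $\sigma(T)\subset A$ we have $F\subset\mathbb{C}\setminus\sigma(T)=\rho(T)$. Thus $F$ is a closed (indeed compact) subset of the resolvent set $\rho(T)$ containing every point outside $A$ that an eigenvalue of $T_h$ could possibly reach, which is exactly the configuration Corollary~\ref{lem:disResolBdd} is designed for.

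The core step is then to apply the corollary: for $h$ small enough, $\|R_z(T_h)\|_{\mathscr{L}(H_h(\Omega),H_h(\Omega))}\le C$ uniformly for $z\in F$, so $z-T_h$ is invertible for every $z\in F$ and therefore $F\cap\sigma(T_h)=\emptyset$. Combining this with the first step finishes the argument: if some $\mu_h\in\sigma(T_h)$ failed to lie in $A$, then $\mu_h\in(\mathbb{C}\setminus A)\cap\overline{B(0,C)}=F$, contradicting $F\cap\sigma(T_h)=\emptyset$. Hence $\sigma(T_h)\subset A$ for all sufficiently small $h$, which is the non-pollution claim and mirrors the proof of Theorem~1 in \cite{Descloux-Nassif-Rappaz1978-1}.

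I expect the only genuine subtlety to be ensuring the region under inspection is \emph{compact} rather than merely closed, since the uniform resolvent bound in Corollary~\ref{lem:disResolBdd} (with a threshold $h_0$ independent of $z$) is obtained from the pointwise estimate of Theorem~\ref{thm:disResol} via a covering argument that requires compactness. This is precisely why the $h$-uniform operator-norm bound on $T_h$ is indispensable: without it, $\mathbb{C}\setminus A$ would be unbounded and no single $h_0$ valid over the whole set could be extracted. The supporting ingredients—self-adjointness and compactness of $T$, together with the perturbation estimate $\|(T-T_h)f\|_{1,J}\le Ch\|f\|_{1,J}$ from Theorem~\ref{thm:energyerror} that underlies the corollary—are already available, so no further analytic input is needed.
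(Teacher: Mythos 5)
Your proposal is correct and takes essentially the same route as the paper: the paper's proof consists precisely of the remark that the theorem is a direct consequence of Corollary~\ref{lem:disResolBdd}, argued as in Theorem~1 of \cite{Descloux-Nassif-Rappaz1978-1}, and your argument—an $h$-uniform bound on $\|T_h\|_{\mathscr{L}(H_h(\Omega),H_h(\Omega))}$ confining $\sigma(T_h)$ to a fixed disc, followed by the uniform resolvent bound on the compact set $F=(\mathbb{C}\setminus A)\cap\overline{B(0,C)}\subset\rho(T)$—is exactly that argument written out in full. You have correctly supplied the details (including the role of compactness of $F$ and the harmlessness of the eigenvalue $0$) that the paper delegates to the reference.
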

\\
This implies that there are no discrete spurious eigenvalues of the solution operator $T_h$.

Now we turn to show the non-pollution and completeness of the eigenspace \cite{Descloux-Nassif-Rappaz1978-1,Descloux-Nassif-Rappaz1978-2}. Let $\mu$ be an eigenvalue of $T$ with algebraic multiplicity $n$. We define the spectral projection $E(\mu)$ from $L^2(\Omega)$ into $H^1_0(\Omega)$ by
$$E(\mu) = \frac{1}{2\pi i}\int_{\Lambda} R_z(T)\, dz, $$
where $\Lambda$ be a Jordan curve in $\mathbb{C}$ containing $\mu$, which lies in $\rho(T)$ and does not enclose any other points of $\sigma(T)$ \cite{Kato}. By Corollary \ref{lem:disResolBdd}, the discrete resolvent operator $R_z (T_h)$ is bounded. Therefore, we can define the discrete spectral projection $E_h(\mu)$ from $L^2(\Omega)$ into $H_h(\Omega)$ by
$$E_h(\mu) = \frac{1}{2\pi i} \int_{\Lambda} R_z(T_h)\, dz. $$
The projections $E(\mu)$ and $E_h(\mu)$ are simply denoted by $E$ and $E_h$, respectively. The following Theorem provides the uniform convergence of spectral projections.
\begin{theorem}
It holds that
\begin{equation}
\lim_{h \to 0}\|E - E_h\|_{\mathscr{L}(L^2(\Omega), H_h(\Omega))} = 0. \nonumber
\end{equation}
\label{thm:specConv}
\end{theorem}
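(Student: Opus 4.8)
The plan is to reduce the claim to the first resolvent identity together with the convergence rate $\|T-T_h\|=O(h)$ furnished by Theorem~\ref{thm:energyerror}. Since $E$ and $E_h$ are Cauchy integrals of the two resolvents over the \emph{same} fixed Jordan curve $\Lambda$, subtracting gives
\[
E-E_h=\frac{1}{2\pi i}\int_{\Lambda}\bigl(R_z(T)-R_z(T_h)\bigr)\,dz .
\]
I would expand the integrand by the resolvent identity in the orientation
\[
R_z(T)-R_z(T_h)=R_z(T_h)\,(T-T_h)\,R_z(T),
\]
which is chosen so that, read from right to left on a datum $f\in L^2(\Omega)$, each factor acts between the appropriate pair of spaces: $R_z(T)$ keeps $f$ in $L^2(\Omega)$, the error operator $T-T_h$ carries $L^2(\Omega)$ into $H_h(\Omega)$, and $R_z(T_h)$ keeps the result in $H_h(\Omega)$.

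Next I would estimate the three factors separately for fixed $z\in\Lambda$. First, applying Theorem~\ref{thm:energyerror} with $u=Tg$ and $\hat u_h=T_hg$, and then the regularity bound $\|Tg\|_{\wtilde H^2(\Omega)}\le C\|g\|_{0,\Omega}$ of Theorem~\ref{thm:reg}, yields
\[
\|(T-T_h)g\|_{1,J}\le Ch\,\|g\|_{0,\Omega},\qquad\forall\,g\in L^2(\Omega),
\]
so that $\|T-T_h\|_{\mathscr L(L^2(\Omega),H_h(\Omega))}\le Ch$. Second, because $\Lambda$ is a compact subset of $\rho(T)$ and $z\mapsto R_z(T)$ is analytic in operator norm, $\|R_z(T)\|_{\mathscr L(L^2(\Omega),L^2(\Omega))}$ is bounded uniformly for $z\in\Lambda$. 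Third, Corollary~\ref{lem:disResolBdd} gives $\|R_z(T_h)\|_{\mathscr L(H_h(\Omega),H_h(\Omega))}\le C$ uniformly in $z\in\Lambda$ for $h$ small; here I also invoke Theorem~\ref{thm:nonpollspec} to ensure $\Lambda\subset\rho(T_h)$, so the integrand is defined along all of $\Lambda$.

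Combining the three bounds along the composition gives, for every $f\in L^2(\Omega)$ and every $z\in\Lambda$,
\[
\bigl\|R_z(T_h)(T-T_h)R_z(T)f\bigr\|_{1,J}\le Ch\,\|f\|_{0,\Omega},
\]
with a constant independent of $z$. Integrating over $\Lambda$, whose length is finite and fixed independently of $h$, I obtain
\[
\|(E-E_h)f\|_{1,J}\le \frac{|\Lambda|}{2\pi}\,Ch\,\|f\|_{0,\Omega},
\]
and hence $\|E-E_h\|_{\mathscr L(L^2(\Omega),H_h(\Omega))}\le Ch\to0$ as $h\to0$.

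The hard part is not a single sharp estimate but the bookkeeping of spaces across the identity: the weak factor $T-T_h$ must be used exactly as a map $L^2(\Omega)\to H_h(\Omega)$, where it carries the decisive factor $h$, and it must be flanked by $R_z(T)$ acting on $L^2(\Omega)$ and $R_z(T_h)$ acting on $H_h(\Omega)$, the two locations where uniform boundedness over the contour is actually available. Arranging this pairing correctly, and checking that the uniform bound of Corollary~\ref{lem:disResolBdd} holds simultaneously for all $z\in\Lambda$ below a single threshold $h_0$, is the only delicate point; once it is in place, the $O(h)$ decay of $T-T_h$ drives the convergence.
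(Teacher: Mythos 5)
Your proposal is correct and follows essentially the same route as the paper's proof: the resolvent identity $R_z(T)-R_z(T_h)=R_z(T_h)(T-T_h)R_z(T)$ in the same orientation, with $R_z(T_h)$ bounded on $H_h(\Omega)$ via Corollary~\ref{lem:disResolBdd}, $R_z(T)$ bounded on $L^2(\Omega)$, and the factor $T-T_h$ carrying the decay from Theorem~\ref{thm:energyerror}. The only difference is that you make explicit what the paper leaves implicit --- the quantitative $O(h)$ bound on $\|T-T_h\|_{\mathscr{L}(L^2(\Omega),H_h(\Omega))}$ via Theorem~\ref{thm:reg}, the uniformity of the bounds over the compact contour $\Lambda$, and the final integration step --- which is a welcome sharpening but not a different argument.
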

\begin{proof}
By using the resolvent identity
\begin{equation*}
R_z(T) - R_z(T_h) = R_z(T_h)(T-T_h)R_z(T),
\end{equation*}
we obtain for $f \in L^2(\Omega)$,
\begin{equation*}
\begin{split}
\|(E - E_h)f\|_{1,J} & \leq C\|(R_z(T) - R_z(T_h))f\|_{1,J} \\
	& = C\| R_z(T_h)  (T-T_h)  R_z(T) f\|_{1,J} \\
	& \leq C\| R_z(T_h)  \|_{\mathscr{L}(H_h(\Omega), H_h(\Omega))}\|T-T_h\|_{\mathscr{L}(L^2(\Omega), H_h(\Omega))} \\
	&\qquad \cdot \|R_z(T)\|_{\mathscr{L}(L^2(\Omega), L^2(\Omega))}\|f\|_{L^2(\Omega)}.
\end{split}
\end{equation*}
For $h$ small enough, $\|R_z(T_h)\|_{\mathscr{L}(H_h(\Omega), H_h(\Omega))}$ and $\|R_z(T)\|_{\mathscr{L}(L^2(\Omega), L^2(\Omega))}$ are bounded by Theorem \ref{thm:disResol} and Fredholm alternative \cite{Conway}, respectively.
The operator norm $\|T-T_h\|_{\mathscr{L}(L^2(\Omega), H_h(\Omega))}$ goes to zero as $h \to 0$.
The proof is now complete.
\end{proof}

We are now in a position to show the boundedness of the distance between eigenspaces. Such a distance for any closed subspaces of $H_h(\Omega)$ may be evaluated by means  of  distance functions
\begin{eqnarray*}
&\mathrm{dist}_h (x, Y) &\;= \inf_{y \in Y}\|x - y\|_{1,J}, \quad \mathrm{dist}_h(Y,Z) \:= \sup_{y \in Y, \|y\|_{1,J} = 1}\mathrm{dist}_h(y,Z), \\
&\mathrm{dist}\;(Y,\,Z) &\;= \max(\mathrm{dist}_h(Y,Z), \, \mathrm{dist}_h(Z,Y)).
\end{eqnarray*}
The following results are analogous to Theorem \ref{thm:specConv}, whose proofs can be obtained as in \cite{Descloux-Nassif-Rappaz1978-1}.
\begin{theorem}
\begin{itemize}
\item (Non-pollution of the eigenspace) $$\lim_{h \to 0} \mathrm{dist}_h(E_h(H_h(\Omega)), E(H^1_0(\Omega)))=0.$$
\item (Completeness of the eigenspace) $$\lim_{h \to 0} \mathrm{dist}_h(E(H^1_0(\Omega)), E_h(H_h(\Omega)))=0.$$
\label{n.p.e}
\end{itemize}
\end{theorem}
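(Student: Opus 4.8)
The plan is to obtain both statements as immediate consequences of the uniform convergence of the spectral projections proved in Theorem~\ref{thm:specConv}, together with two elementary observations. First, $E$ and $E_h$ are spectral projections, hence idempotent: $E^2=E$ and $E_h^2=E_h$. Consequently $Ey=y$ whenever $y\in E(H^1_0(\Omega))$ and $E_h y=y$ whenever $y\in E_h(H_h(\Omega))$, while $Ey\in E(H^1_0(\Omega))$ and $E_h y\in E_h(H_h(\Omega))$ for \emph{every} $y\in L^2(\Omega)$ (since, e.g., $Ey\in H^1_0(\Omega)$ and $E(Ey)=Ey$). Second, the definition of $\|\cdot\|_{1,J}$ yields $\|v\|_{0,\Omega}\le \|v\|_{1,J}$, because $\sum_{K}\|v\|^2_{0,K}=\|v\|^2_{0,\Omega}$ is one of the three nonnegative terms making up $\|v\|^2_{1,J}$. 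This last inequality is the bridge between the $L^2$-to-$H_h$ operator norm appearing in Theorem~\ref{thm:specConv} and the $\|\cdot\|_{1,J}$-distances defining $\mathrm{dist}_h$.

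For \textbf{non-pollution}, I would fix $y\in E_h(H_h(\Omega))$ with $\|y\|_{1,J}=1$. Since $E_h y=y$ and $Ey\in E(H^1_0(\Omega))$, the vector $Ey$ is an admissible competitor in the infimum defining $\mathrm{dist}_h(y,E(H^1_0(\Omega)))$, so
\[
\mathrm{dist}_h\big(y,E(H^1_0(\Omega))\big)\le \|y-Ey\|_{1,J}=\|(E_h-E)y\|_{1,J}\le \|E-E_h\|_{\mathscr{L}(L^2(\Omega),H_h(\Omega))}\,\|y\|_{0,\Omega}\le \|E-E_h\|_{\mathscr{L}(L^2(\Omega),H_h(\Omega))}.
\]
Taking the supremum over all such $y$ and letting $h\to0$ gives the first limit by Theorem~\ref{thm:specConv}. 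For \textbf{completeness}, the argument is symmetric: fixing $y\in E(H^1_0(\Omega))$ with $\|y\|_{1,J}=1$, I would use $Ey=y$ and $E_h y\in E_h(H_h(\Omega))$ to obtain
\[
\mathrm{dist}_h\big(y,E_h(H_h(\Omega))\big)\le \|y-E_h y\|_{1,J}=\|(E-E_h)y\|_{1,J}\le \|E-E_h\|_{\mathscr{L}(L^2(\Omega),H_h(\Omega))},
\]
and then pass to the supremum and the limit as before.

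The computations are short, so the difficulty here is conceptual rather than technical. In the Descloux--Nassif--Rappaz framework the two halves of such a theorem generally demand separate hypotheses, but the strong conclusion of Theorem~\ref{thm:specConv} --- convergence in the \emph{operator norm} --- collapses both into Kato's elementary estimate for the gap between two nearby projections. The only point requiring care is the bookkeeping of norms: the unit vectors are normalized in $\|\cdot\|_{1,J}$, whereas the operator-norm bound supplies a factor $\|y\|_{0,\Omega}$, which the inequality $\|y\|_{0,\Omega}\le\|y\|_{1,J}$ lets me absorb into the constant $1$. For completeness of the setup I would also note that $R_z(T_h)$ is bounded along the Jordan curve $\Lambda$ for $h$ small enough, by Theorem~\ref{thm:disResol}, so that $E_h$ is well defined and the resolvent identity used to prove Theorem~\ref{thm:specConv} is legitimate.
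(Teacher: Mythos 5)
Your proof is correct. It is worth noting how it relates to the paper's treatment: the paper does not actually write out a proof of Theorem \ref{n.p.e}; it only remarks that the results are ``analogous to Theorem \ref{thm:specConv}'' and can be obtained as in Descloux--Nassif--Rappaz. In that reference the two bullets are genuinely asymmetric: the discrete operator (hence the discrete spectral projection) lives only on the discrete subspace, so non-pollution follows from a uniform bound on $(E-E_h)$ restricted to the discrete space, while completeness additionally requires an approximation hypothesis on the discrete spaces (their property P2) and an intermediate approximant of each $y$ in the continuous eigenspace. Your argument sidesteps this asymmetry by exploiting two features specific to the present setting: $T_h$, and therefore $E_h$, is defined on all of $L^2(\Omega)$, and Theorem \ref{thm:specConv} is stated as operator-norm convergence on all of $L^2(\Omega)$, not merely on $\what N_h(\Omega)$. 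With these, both bullets collapse symmetrically to the single gap estimate $\|y-\text{(projection of $y$)}\|_{1,J}\le \|E-E_h\|_{\mathscr{L}(L^2(\Omega),H_h(\Omega))}\|y\|_{0,\Omega}$ together with the norm comparison $\|y\|_{0,\Omega}\le\|y\|_{1,J}$, exactly as you carry out; the idempotency observations and the verification that $Ey$ and $E_hy$ land in the correct subspaces are the right (and sufficient) supporting facts. What your route buys is a short, self-contained proof in which the approximation property of $\what N_h(\Omega)$ never appears explicitly (it is already absorbed into Theorem \ref{thm:specConv} via Theorem \ref{thm:energyerror}); what it costs is generality, since in settings where only restricted-norm convergence is available (e.g.\ the noncompact case that motivated Descloux--Nassif--Rappaz) the completeness half would again require the separate approximation argument.
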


It remains to show that the distance between the spectrums of $T$ and $T_h$ vanishes as $h$ goes to zero.
\begin{theorem}
(Completeness of the spectrum) For all $z \in \sigma(T)$,
$$\lim_{h \to 0} \mathrm{dist}_h (z, \sigma(T_h)) = 0.$$
\label{c.s}
\end{theorem}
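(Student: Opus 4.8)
The plan is to argue by contradiction, combining the uniform convergence of the spectral projections from Theorem \ref{thm:specConv} with the elementary fact that a contour integral of a resolvent enclosing no eigenvalue vanishes. First I fix an eigenvalue $z \in \sigma(T)$ of algebraic multiplicity $n \geq 1$ and recall that its spectral projection $E$ has rank $n$, so $E \neq 0$. Because $E$ maps $L^2(\Omega)$ into $H^1_0(\Omega)$, on which the jump contribution to $\|\cdot\|_{1,J}$ vanishes and the norm reduces to the ordinary $H^1$-norm, the quantity $c_0 := \|E\|_{\mathscr{L}(L^2(\Omega), H_h(\Omega))}$ is a fixed positive constant independent of $h$.

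Next I would suppose the conclusion fails, so that there exist $\delta > 0$ and a sequence $h_k \to 0$ with $\mathrm{dist}_h(z, \sigma(T_{h_k})) \geq \delta$ for every $k$; that is, no eigenvalue of $T_{h_k}$ comes within distance $\delta$ of $z$. Since $\sigma(T)$ is discrete and accumulates only at $0$, I may choose a radius $r$ with $0 < r < \delta$ so small that the circle $\Lambda = \{\, w \in \mathbb{C} : |w - z| = r \,\}$ lies in $\rho(T)$ and encloses $z$ but no other point of $\sigma(T)$; this is exactly a curve of the type used to define $E$ and $E_{h_k}$.

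The decisive observation is that $\Lambda$ together with the disk it bounds lies entirely in $\rho(T_{h_k})$, since every eigenvalue of $T_{h_k}$ is at distance at least $\delta > r$ from $z$. As $T_{h_k}$ acts on the finite-dimensional space $\what N_h(\Omega)$, the map $w \mapsto R_w(T_{h_k})$ is meromorphic with poles only at the eigenvalues of $T_{h_k}$; with none of these inside $\Lambda$, the integrand is holomorphic there and Cauchy's theorem yields
$$E_{h_k} = \frac{1}{2\pi i}\int_{\Lambda} R_w(T_{h_k})\, dw = 0.$$
Hence $\|E - E_{h_k}\|_{\mathscr{L}(L^2(\Omega), H_h(\Omega))} = c_0 > 0$ for all $k$, contradicting Theorem \ref{thm:specConv}, which forces this quantity to $0$. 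This contradiction establishes the claim.

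I expect the main obstacle to be the vanishing of the discrete projection $E_{h_k}$: one must check that the single contour $\Lambda$ can be kept simultaneously in $\rho(T)$ and in $\rho(T_{h_k})$ for all large $k$ --- precisely what the assumed uniform separation $\mathrm{dist}_h(z, \sigma(T_{h_k})) \geq \delta$ guarantees --- and that the operator-valued integral is genuinely zero, which rests on $T_{h_k}$ having purely discrete spectrum (it is finite-rank, hence compact). Everything else reduces to invoking Theorem \ref{thm:specConv}.
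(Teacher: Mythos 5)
Your proof is correct, but it is genuinely different from what the paper does: the paper's entire ``proof'' of Theorem \ref{c.s} is a citation of Theorem 6 in \cite{Descloux-Nassif-Rappaz1978-1}, whereas you supply a self-contained contradiction argument resting only on Theorem \ref{thm:specConv} and Cauchy's theorem for operator-valued holomorphic functions. Your argument is the standard one behind such completeness results (and is in the spirit of the cited reference): if all of $\sigma(T_{h_k})$ stayed at distance $\delta$ from $z$, the closed disk bounded by the small circle $\Lambda$ would lie in $\rho(T_{h_k})$, the resolvent would be holomorphic there, so $E_{h_k}=0$, while $E\neq 0$, contradicting $\|E-E_{h_k}\|\to 0$. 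Two points deserve explicit mention to make this airtight. First, Theorem \ref{thm:specConv} is stated in the paper for one fixed Jordan curve; since your contour of radius $r<\delta$ may differ from it (and the two discrete projections need not coincide a priori, as the larger contour could enclose discrete eigenvalues your smaller one excludes), you should note that the proof of Theorem \ref{thm:specConv} uses only that the contour is a compact subset of $\rho(T)$ enclosing $z$ and no other point of $\sigma(T)$, so it applies verbatim with your $\Lambda$; your observation that $\|E\|_{\mathscr{L}(L^2(\Omega),H_h(\Omega))}$ is $h$-independent because $\|\cdot\|_{1,J}$ reduces to the $H^1$-norm on $H^1_0(\Omega)$ is a nice touch that this re-instantiation needs. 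Second, the statement quantifies over all $z\in\sigma(T)$, and $0\in\sigma(T)$ (since $T$ is compact on an infinite-dimensional space) yet is not an eigenvalue, so your standing assumption of multiplicity $n\geq 1$ and the contour construction both fail there; that case is trivial, however, because $T_h$ has finite rank, hence $0\in\sigma(T_h)$ and the distance is identically zero. Also, a small justification slip: you do not need meromorphy of $w\mapsto R_w(T_{h_k})$ (a finite-rank-specific fact); holomorphy of the resolvent on $\rho(T_{h_k})$, valid for any bounded operator, already makes the integral vanish. What your route buys is transparency and independence from the external reference; what the paper's route buys is brevity and the full generality of the Descloux--Nassif--Rappaz framework.
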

\begin{proof}
The proof follows from Theorem $6$ in \cite{Descloux-Nassif-Rappaz1978-1}.
\end{proof}

\section{Convergence analysis}
In this section, we present the convergence analysis of eigenvalues. By using the spectral properties of compact operators in the previous section, we show the convergence rate of eigenvalues.
\begin{theorem}\label{thm:convEig}
Let $\mu$ be an eigenvalue of $T$ with multiplicity $n$. Then for $h$ small enough there exist $n$ eigenvalues $\{\mu_{1,h}, ... , \mu_{n,h}\}$ of $T_h$ which converge to $\mu$ as follows
\begin{equation*}
\sup_{1\leq i \leq n}|\mu - \mu_{i,h}| \leq C h^2,
\end{equation*}
where a positive constant $C$ is independent of $\mu$ and $h$.
\end{theorem}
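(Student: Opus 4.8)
The plan is to derive the rate from the abstract spectral approximation theory of Babu\v ska and Osborn \cite{Babuska-Osborn,Osborn}, specialized to self-adjoint compact operators, exactly as the convergence statements of Section~4 were obtained from \cite{Descloux-Nassif-Rappaz1978-1}. All the qualitative ingredients are already in place: by Theorems~\ref{thm:nonpollspec}, \ref{n.p.e} and \ref{c.s} the spectrum of $T_h$ neither pollutes nor misses $\sigma(T)$, and by Theorem~\ref{thm:specConv} the spectral projections converge in norm. Consequently, for $h$ small enough there are exactly $n$ eigenvalues $\mu_{1,h},\dots,\mu_{n,h}$ of $T_h$ (counted with multiplicity) clustering around $\mu$, which is what allows us even to speak of $\sup_i |\mu - \mu_{i,h}|$.

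First I would fix an $L^2(\Omega)$-orthonormal basis $\{\phi_1,\dots,\phi_n\}$ of the eigenspace $E(H^1_0(\Omega))$, so that $T\phi_j = \mu\phi_j$. Since each $\phi_j$ is an eigenfunction of the interface problem, the regularity Theorem~\ref{thm:reg} gives $\phi_j \in \wtilde{H}^2_\Gamma(\Omega)$ with $\|\phi_j\|_{\wtilde{H}^2(\Omega)} \le C$. Because $T$ is self-adjoint and compact, every nonzero eigenvalue is semisimple (algebraic multiplicity equals geometric multiplicity, as noted in Section~4), so the ascent of $\mu - T$ equals one. Osborn's eigenvalue estimate for semisimple eigenvalues, with the adjoint eigenspace identified with $E(H^1_0(\Omega))$ by self-adjointness, then reads
\begin{equation*}
|\mu - \mu_{i,h}| \le C\sum_{j,k=1}^{n} |((T-T_h)\phi_j,\phi_k)| + C\,\|(T-T_h)|_{E(H^1_0(\Omega))}\|^2, \qquad 1\le i\le n .
\end{equation*}
The content of the theorem is thereby reduced to estimating the pairing terms and the quadratic remainder.

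Both bounds are then immediate consequences of Theorem~\ref{thm:energyerror}. For the pairing terms I would apply the Cauchy--Schwarz inequality in $L^2(\Omega)$ and observe that $(T-T_h)\phi_j$ is precisely the error of the source problem with datum $\phi_j$: since $\|T\phi_j\|_{\wtilde{H}^2(\Omega)} \le C\|\phi_j\|_{0,\Omega}$ by Theorem~\ref{thm:reg}, Theorem~\ref{thm:energyerror} yields $\|(T-T_h)\phi_j\|_{0,\Omega} \le Ch^2$, whence $|((T-T_h)\phi_j,\phi_k)| \le \|(T-T_h)\phi_j\|_{0,\Omega}\,\|\phi_k\|_{0,\Omega} \le Ch^2$. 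The same $L^2$ estimate applied on the finite-dimensional, smooth space $E(H^1_0(\Omega))$ shows $\|(T-T_h)|_{E(H^1_0(\Omega))}\|^2 = O(h^4)$, so the remainder is of higher order; even if one measures the restriction in the energy norm $\|\cdot\|_{1,J}$, where Theorem~\ref{thm:energyerror} only gives $O(h)$, its square is still $O(h^2)$ and does not spoil the rate. Summing the finitely many $O(h^2)$ pairing terms and taking the supremum over $1\le i\le n$ gives $\sup_i |\mu - \mu_{i,h}| \le Ch^2$.

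The main obstacle is not this final arithmetic but the justification of the abstract Osborn estimate in the present \emph{nonconforming} setting, where $T$ and $T_h$ map into the different spaces $H^1_0(\Omega)$ and $\what N_h(\Omega) \subset H_h(\Omega)$, and $T_h$ is self-adjoint only with respect to the mesh-dependent form $a_h^\sigma(\cdot,\cdot)$ rather than the $L^2$ inner product on a fixed space. One must verify that the hypotheses of the Descloux--Nassif--Rappaz/Osborn framework --- norm convergence of $E_h$ to $E$, boundedness of the discrete resolvents, and the ascent-one structure, which are exactly the statements of Section~4 --- indeed reduce the eigenvalue error to the displayed pairing-plus-quadratic form with the $L^2$ pairing. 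A secondary point requiring care is the asserted independence of $C$ from $\mu$: the Osborn constant a priori depends on the spectral data of $\mu$ (its separation from the rest of $\sigma(T)$ and the norm of the projection $E(\mu)$), so this is to be read as independence from $h$ and from the index $i$ for the fixed eigenvalue under consideration.
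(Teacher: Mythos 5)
Your proposal is correct in substance, but it reaches the rate by a different abstract tool than the paper. The paper never invokes Osborn's pairing-plus-quadratic-remainder estimate; instead it carries out the similarity-transformation argument of \cite{Babuska-Osborn,Osborn} explicitly: it restricts the discrete projection to the continuous eigenspace, $\Phi_h = E_h|_{E(L^2(\Omega))}$, uses Theorem \ref{thm:specConv} and Theorem \ref{n.p.e} to show that $\Phi_h$ is a bijection with $\|\Phi_h^{-1}\|$ bounded (indeed $\|\Phi_h f\|_{0,\Omega}\geq \tfrac12\|f\|_{0,\Omega}$ for $h$ small), sets $\wtilde T_h = \Phi_h^{-1}T_h\Phi_h$ and $S_h = \Phi_h^{-1}E_h$, and derives the identity $(\wtilde T - \wtilde T_h)f = S_h(T-T_h)f$ on $E(L^2(\Omega))$ from $T_hE_h = E_hT_h$. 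Since $\wtilde T = \mu I$ and $\wtilde T_h$ is similar to $T_h$ restricted to $E_h(H_h(\Omega))$, the eigenvalue error is controlled by $\|\wtilde T - \wtilde T_h\|_{\mathscr{L}(E(L^2(\Omega)),E(L^2(\Omega)))}$, which by the identity and Theorem \ref{thm:energyerror} is at most $Ch^2$. In other words, the paper only needs the crude first-order bound $\|(T-T_h)|_{E}\|$ in $L^2$, which is already $O(h^2)$; your route invokes the refined Osborn estimate and then bounds its pairing terms $|((T-T_h)\phi_j,\phi_k)|$ by exactly the same Cauchy--Schwarz/$L^2$-error argument, so the refined structure is not actually exploited --- it would pay off only if one were after a rate beyond $h^2$ (where, for this nonconforming scheme, consistency terms would enter the pairing estimates). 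Both proofs rest on the same two pillars: the Section 4 spectral convergence for the existence of the $n$ discrete eigenvalues, and Theorem \ref{thm:energyerror} for the rate.

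Regarding the obstacle you flag as the main one: it dissolves once the abstract theory is set on $X = L^2(\Omega)$. Both $T$ and $T_h$ are compact operators from $L^2(\Omega)$ into itself, $\|T - T_h\|_{\mathscr{L}(L^2(\Omega),L^2(\Omega))}\leq Ch^2$ by Theorems \ref{thm:reg} and \ref{thm:energyerror}, and $T_h$ is self-adjoint with respect to the $L^2$ inner product itself, not merely with respect to $a_h^\sigma$: taking $\hat\phi = T_hg$ in the defining relation for $T_hf$ and $\hat\phi = T_hf$ in that for $T_hg$ gives $(f,T_hg) = a_h^\sigma(T_hf,T_hg) = (g,T_hf)$. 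Hence Osborn's hypotheses hold verbatim on $L^2(\Omega)$ and the adjoint eigenvectors coincide with the $\phi_k$, so your reduction is legitimate. Your reading of the constant (independent of $h$ and of the index $i$, but depending on the spectral data of the fixed $\mu$) is also the honest one; it matches what both your argument and the paper's actually deliver.
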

\begin{proof}
The existence of $\mu_{i,h}$ is a direct consequence of the previous section. Now we estimate the convergence rate of $\mu_{i,h}$.
Let $\Phi_h$ be the restriction of $E_h$ to $E(L^2(\Omega))$:
$$\Phi_h=E_h|_{E(L^2(\Omega))} : E(L^2(\Omega)) \to E_h(H_h(\Omega)).$$
Following the arguments in \cite{Babuska-Osborn, Osborn}, we can show that the inverse $\Phi_h^{-1} : E_h(H_h(\Omega)) \to E(L^2(\Omega))$ is bounded for $h$ small enough. To show $\Phi_h^{-1}$ is defined, let $\Phi_h f = 0$ with $f \in E(L^2(\Omega))$. Then by Theorem \ref{thm:specConv}, we have
$$\|f\|_{0,\Omega} = \|f - \Phi_h f\|_{0,\Omega} = \|E f - E_h f\|_{0,\Omega} \leq \|E - E_h\|_{\mathscr{L}(L^2(\Omega), H_h(\Omega))}\|f\|_{0,\Omega}.$$
Thus $\Phi_h$ is one-to-one. By Theorem \ref{n.p.e}, $\Phi_h$ is onto such that the inverse $\Phi_h^{-1}$ is defined. Now we show that $\Phi_h^{-1}$ is bounded. For $f \in E(L^2(\Omega))$ and $h$ small enough,
\begin{align*}
\|\Phi_h f \|_{0,\Omega} &\geq \|f\|_{0,\Omega} - \|f - \Phi_h f \|_{0,\Omega} \\
	&= \|f\|_{0,\Omega} - \|E f - E_h f\|_{0,\Omega} \\
	&\geq (1 - \|E- E_h\|_{\mathscr{L}(L^2(\Omega), H_h(\Omega))}) \|f\|_{0,\Omega} \\
	&\geq \frac{1}{2} \|f\|_{0,\Omega}.
\end{align*}
Hence the inverse $\Phi_h^{-1}$ is bounded.

Let $\wtilde T$ be the restriction of $T$ to $E(L^2(\Omega))$ and define $\wtilde T_h := \Phi^{-1}_h T_h \Phi_h$. Setting $S_h = \Phi_h^{-1}E_h : L^2(\Omega) \to E(L^2(\Omega))$ (see Figure \ref{fig:operator}), we see that $S_h$ is bounded and $S_h f = f$ for any $f \in E(L^2(\Omega))$. By definitions of $\wtilde T$, $\wtilde T_h$, $S_h$ and $\Phi_h$, and the property $T_h E_h = E_h T_h$, we have for any $f \in E(L^2(\Omega))$,
\begin{align}
(\wtilde{T}-\wtilde{T}_h)f &= Tf - \Phi_h^{-1}T_h \Phi_h f \nonumber \\
	&= S_h T f - \Phi_h^{-1} T_h E_h f \nonumber \\
	&= S_h T f - \Phi_h^{-1}E_hT_h f \nonumber  \\
	&= S_h(T - T_h)f. \nonumber
\end{align}
By definition of operator norm (\ref{eq:oper-norm}) and Theorem \ref{thm:energyerror}, we have
\begin{align*}
\sup_{1\leq i \leq n} |\mu - \mu_{i,h}| &\leq C \| \wtilde T - \wtilde T_h\|_{\mathscr{L}(E(L^2(\Omega)), E(L^2(\Omega)))} \\
	&= C \sup_{f \in E(L^2(\Omega))} \frac{\|(\wtilde T - \wtilde T_h)f\|_{0,\Omega}}{\|u\|_{0,\Omega}} \\
	&= C \sup_{f \in E(L^2(\Omega))} \frac{\|S_h(T - T_h) f\|_{0,\Omega}}{\|f \|_{0,\Omega}} \\
	&\leq C \sup_{f \in E(L^2(\Omega))} \frac{\|(T - T_h) f\|_{0, \Omega}}{\|f \|_{0,\Omega}} \\
	&\leq C h^2\;.
\end{align*}
\begin{figure}[!ht]
\begin{center}
 \begin{pspicture}(-1.5,-.7)(10.,3.)
 \small
\psline[linewidth=0.5pt]{->}(0.67,2.3) (3.4 ,2.3)
\rput(2,2.6){ $E_h$ }
\psline[linewidth=0.5pt]{<-}(0.2,0.3) (3.6,2.0)
\rput(1.7,1.15){ $\Phi_h^{-1}$}
\rput(0.1,1.3){$S_h=\Phi_h^{-1}E_h$}
\rput(0,2.3){$L^2(\Omega) $}  \rput(4,2.3){$ E_h(H_h)$}
\rput(0,0){$E(L^2(\Omega))$}
\rput(1.14,1.6){\psarcn[linecolor=red,linewidth=0.5pt]{->}(0,0){0.3}{120}{240}}
\psline[linewidth=0.5pt,linestyle=dashed]{->}(0,1.95) (0,0.5)

\rput(5.0,0){
\psline[linewidth=0.5pt]{->}(0.75,0.0) (3.6,0.0)
\rput(2,-0.3){ $\Phi_h$}
\psline[linewidth=0.5pt]{<-}(0.2,0.3) (3.6,2.0)
\rput(1.7,1.15){ $\Phi_h^{-1}$}
 \rput(4,2.3){$ E_h(H_h)$}
\rput(0,0){$E(L^2(\Omega))$}  \rput(4.2,0){$ E_h(H_h)$}
\psline[linewidth=0.5pt]{<-}(4,1.95) (4,0.5)
\rput(0.2,0.1) {\psarc[linecolor=blue,linewidth=0.5pt,linestyle=dashed]{->}(0,0){0.45}{250}{160}} 
\rput(2.65,0.6){$\tilde T_h=\Phi_h^{-1}T_h\Phi_h $}
\rput(4.3,1.3){$T_h $}
}
\end{pspicture}
\end{center}
\caption{The operators $S_h$ and $\tilde T_h$}
\label{fig:operator}
\end{figure}
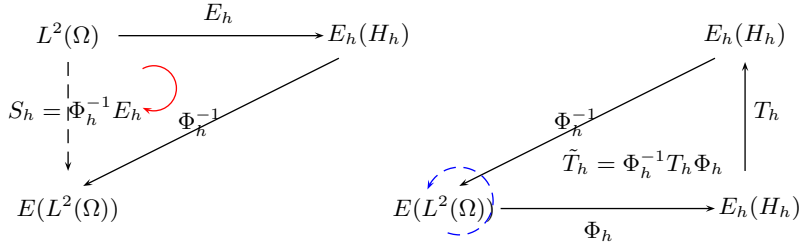
\end{proof}
\begin{remark}
Theorem \ref{thm:convEig} can be expressed in terms of the eigenvalues $\lambda = \mu^{-1}$ and $\lambda_{i,h} = \mu_{i,h}^{-1}$ as
$$\frac{|\lambda - \lambda_{i,h}|}{|\lambda_{i,h}|} \leq C_1 h^2.$$
For $h$ small enough, we derive the estimate for the relative error
$$\frac{|\lambda - \lambda_{i,h}|}{|\lambda|} \leq \frac{C_1 h^2}{1 - C_1|\lambda| h^2} \leq C h^2.$$
\end{remark}

\section{Numerical results}
We demonstrate numerical experiments for the problem (\ref{eq:modelEq}). In the first example, we test an elliptic eigenvalue problem with a circular interface for which we know the exact eigenvalues. Next we perform an experiment for the case with star-shaped interface.  We observe the optimal orders of convergence of numerical eigenvalues. In our computations we use the package ARPACK \cite{Lehoucq-Sorensen-Yang} which is designed for solving large sparse eigenvalue problems.

\textbf{Example 1}. Let a circular computational domain be $\Omega = \{ (r,\theta) : 0 \leq r \leq R_O, 0 \leq \theta < 2\pi\}$ with an interface $\Gamma = \{(r, \theta) : r = R_I, 0 \leq \theta < 2\pi\}$. The eigenpairs $(\lambda,u)$ of the model problem (\ref{eq:modelEq}) are given by $u(x,y) = R(r)\Theta(\theta)$,
\begin{align*}
&\Theta(\theta) = d_1\cos m \theta + d_2\sin m\theta, \\
& R(r) =  \left\{\begin{array}{l l} c_1^{+}J_{m}(\sqrt{\frac{\lambda}{\beta^{+}}}r) + c_2^{+}Y_{m}(\sqrt{\frac{\lambda}{\beta^{+}}}r), & R_I < r \leq R_O, \\
 c_1^{-}J_{m}(\sqrt{\frac{\lambda}{\beta^{-}}}r) , & 0 \leq r \leq R_I,
 \end{array}
 \right.
\end{align*}
where $c^{\pm}_i$ and $d_i$ are constants, and $J_{m}$ and $Y_{m}$ the Bessel functions of the first and second kind of order $m$, respectively.
In Appendix, we explain in more details how the coefficients $c^{\pm}_i$, $d_i$ could be determined.
 We set $R_O=1$, $R_I=0.38$ and $(\beta^-, \beta^+) = (1,1000), (1000,1)$. It seems to be good to choose $\sigma$ dependent on $\beta$, say  $\sigma = \kappa \beta$ for some $ \kappa >0$.  The triangulation of the circular domain consists of qusi-regular triangles with the maximal diameter $h$, which may intersect the interface $\Gamma$ as Figure \ref{fig:mesh1}. Tables \ref{table:circle1-} and \ref{table:circle1+} show the first ten eigenvalues and their rates of convergence. The first columns are the exact values and the other columns are the eigenvalues of IFEM for varying $h$. From the second to sixth column, the meshes are generated so that the degree of freedom quadruples, thus $h$ nearly halves. The numbers in parentheses for each column show the order of convergence. We observe that the order of convergence is quadratic and there are no spurious eigenvalues. Figure \ref{fig:circle-eigV} illustrates two eigenfunctions corresponding to eigenvalues $\lambda_1$ and $\lambda_2$ in the case of $\beta^- = 1, \, \beta^+ = 1000$. The cases with other values of $R_I$ and $\beta$ show similar results, although we do not present here.

We recount the influence of penalty parameter $\sigma$ in (\ref{eq:dsweakform}). The results are shown in Figure \ref{fig:sigma}. We notice that the case for $\kappa = 0.1$ has some deteriorations in the order of convergence. The cases when $\kappa \in [1,100]$ achieve desired convergence orders (see Theorem \ref{thm:convEig}). In Tables \ref{table:circle1-} and \ref{table:circle1+}, we choose $\kappa = 1$.
\begin{figure}[!ht]
	\centering
	 \includegraphics[width=0.5\textwidth, height=0.5\textwidth]{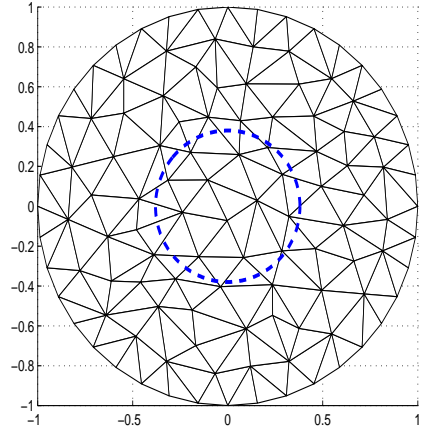}
	\caption{Example of mesh generation. The inner broken line represents the interface $\Gamma$.}
	\label{fig:mesh1}
\end{figure}

\renewcommand{\arraystretch}{1.2}
\begin{table}[!ht] \footnotesize
\centering
\begin{tabular}{|r||r|r|r|r|r|} \hline
\multicolumn{1}{|c||}{$\lambda_{exact}$}	  & \multicolumn{1}{|c}{Level 1 (ord)} & \multicolumn{1}{|c}{Level 2 (ord)}  & \multicolumn{1}{|c}{Level 3 (ord)}  & \multicolumn{1}{|c}{Level 4 (ord)}  & \multicolumn{1}{|c|}{Level 5 (ord)} \\ \hline
39.972&		40.018	(2.11) &	39.982	(2.15) &	39.974	(1.99) &	39.972	(2.08) &	39.972	(1.99) \\ 
101.523&		101.744	(2.40)&	101.566	(2.33) &	101.533	(2.02) &	101.525	(2.12) &	101.523	(2.17) \\ 
101.523&		101.772	(2.52) &	101.573	(2.31) &	101.534	(2.18) &	101.525	(2.13) &	101.523	(2.18) \\ 
182.473	&	183.212	(2.64) &	182.626	(2.27) &	182.507	(2.18) &	182.481	(2.20) &	182.475	(2.25) \\ 
182.473	&	183.522	(2.56) &	182.636	(2.68) &	182.507	(2.27) &	182.481	(2.16) &	182.475	(2.27) \\ 
210.604	&	211.120	(1.82) &	210.723	(2.11) &	210.635	(1.96) &	210.612	(2.00) &	210.606	(2.00) \\ 
281.713	&	283.846 (2.54) &	282.098	(2.46) &	281.792	(2.29) &	281.730	(2.18) &	281.716	(2.28) \\ 
281.713	&	284.413	(2.67) &	282.140	(2.66) &	281.798	(2.32) &	281.731	(2.25) &	281.717	(2.29) \\ 
340.329	&	341.615	(2.00) &	340.625	(2.11) &	340.404	(1.98) &	340.347	(2.06) &	340.333	(2.08) \\ 
340.329	&	341.799	(2.16) &	340.643	(2.22) &	340.405	(2.04) &	340.347	(2.05) &	340.333	(2.09) \\ \hline
\multicolumn{1}{|c||}{D.O.F} & 	\multicolumn{1}{|c}{14744}	&	\multicolumn{1}{|c}{59386}	&	\multicolumn{1}{|c}{232605	} &	 \multicolumn{1}{|c}{932343} & \multicolumn{1}{|c|}{3732735} \\ \hline
\end{tabular}
\caption{Eigenvalues by IFEM in Figure \ref{fig:mesh1} when $\beta^- = 1, \beta^+=1000$ and $\kappa = 1$.}
\label{table:circle1-}
\end{table}

\begin{table}[!ht] \footnotesize
\centering
\begin{tabular}{|r||r|r|r|r|r|} \hline
\multicolumn{1}{|c||}{$\lambda_{exact}$}	  & \multicolumn{1}{|c}{Level 1 (ord)} & \multicolumn{1}{|c}{Level 2 (ord)}  & \multicolumn{1}{|c}{Level 3 (ord)}  & \multicolumn{1}{|c}{Level 4 (ord)}  & \multicolumn{1}{|c|}{Level 5 (ord)} \\ \hline
6.047	&	6.049	(2.01) &	6.047	(1.99) &	6.047	(2.00) &	6.047	(1.98) &	6.047	(1.98) \\ 
27.355	&	27.380	(2.35) &	27.360	(2.33) &	27.356	(2.19) &	27.355	(2.13) &	27.355	(2.46) \\ 
27.355	&	27.382	(2.42) &	27.360	(2.36) &	27.356	(2.26) &	27.355	(2.12) &	27.355	(2.46) \\ 
34.126	&	34.175	(2.49) &	34.135	(2.44) &	34.128	(2.24) &	34.126	(2.13) &	34.126	(2.33) \\ 
34.126	&	34.183	(2.42) &	34.136	(2.54) &	34.128	(2.28) &	34.126	(2.16) &	34.126	(2.35) \\ 
39.742	&	39.766	(2.08) &	39.748	(1.99) &	39.744	(2.03) &	39.743	(1.96) &	39.742	(2.01) \\ 
45.091	&	45.171	(2.12) &	45.104	(2.54) &	45.094	(2.21) &	45.091	(2.11) &	45.091	(2.23) \\ 
45.091	&	45.176	(2.62) &	45.106	(2.44) &	45.094	(2.27) &	45.091	(2.17) &	45.091	(2.27) \\ 
59.871	&	59.968	(2.40) &	59.890	(2.31) &	59.875	(2.17) &	59.872	(2.09) &	59.871	(2.17) \\ 
59.871	&	59.990	(2.21) &	59.892	(2.50) &	59.875	(2.20) &	59.872	(2.14) &	59.871	(2.17) \\ \hline
\multicolumn{1}{|c||}{D.O.F} & 	\multicolumn{1}{|c}{14744}	&	\multicolumn{1}{|c}{59386}	&	\multicolumn{1}{|c}{232605	} &	 \multicolumn{1}{|c}{932343} & \multicolumn{1}{|c|}{3732735} \\ \hline
\end{tabular}
\caption{Eigenvalues by IFEM in Figure \ref{fig:mesh1} when $\beta^- = 1000, \beta^+=1$ and $\kappa = 1$. }
\label{table:circle1+}
\end{table}

\begin{figure}[!ht]
	\centering
 	 \includegraphics[width=0.48\textwidth, height=0.48\textwidth]{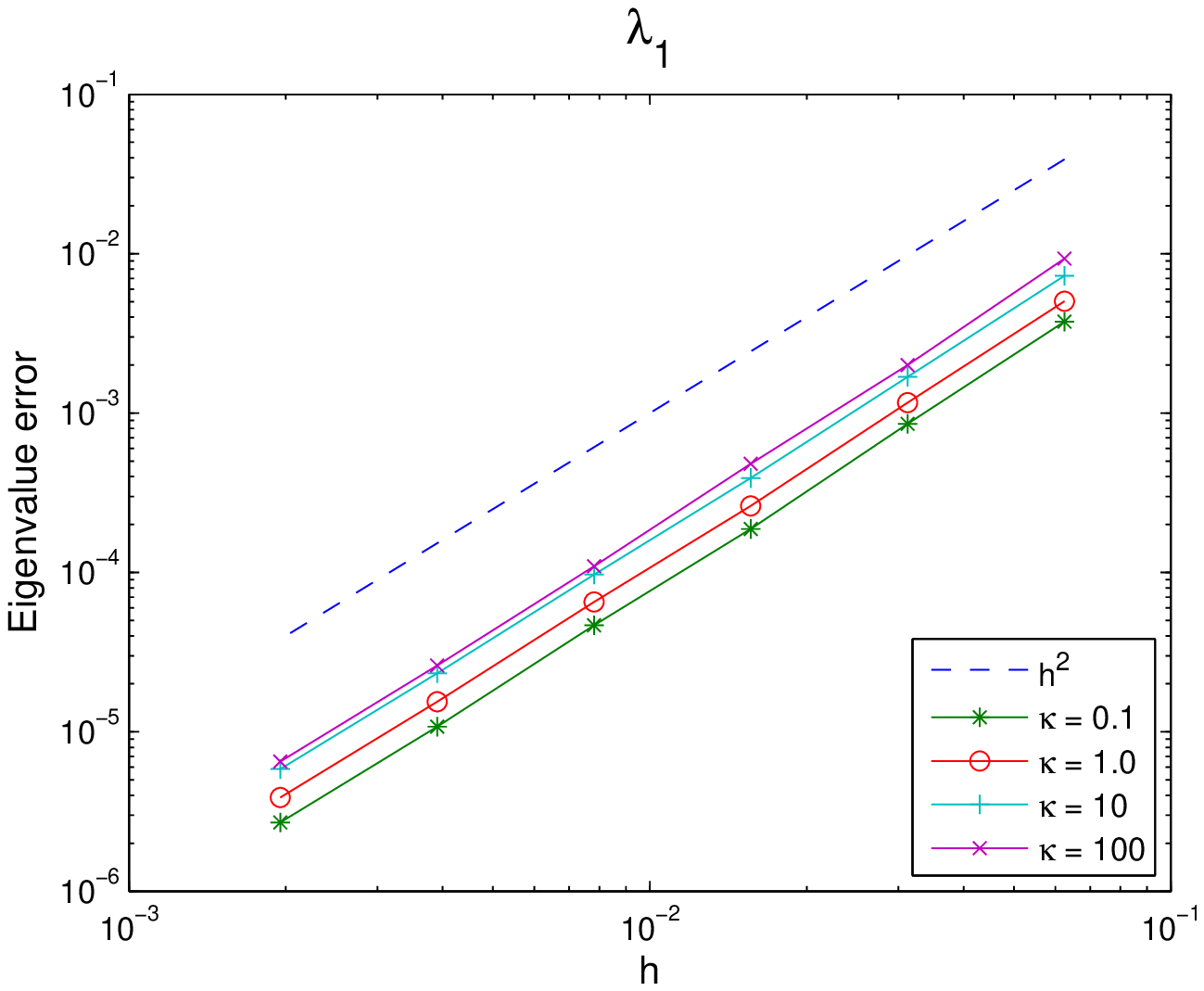}
 	 \includegraphics[width=0.48\textwidth, height=0.48\textwidth]{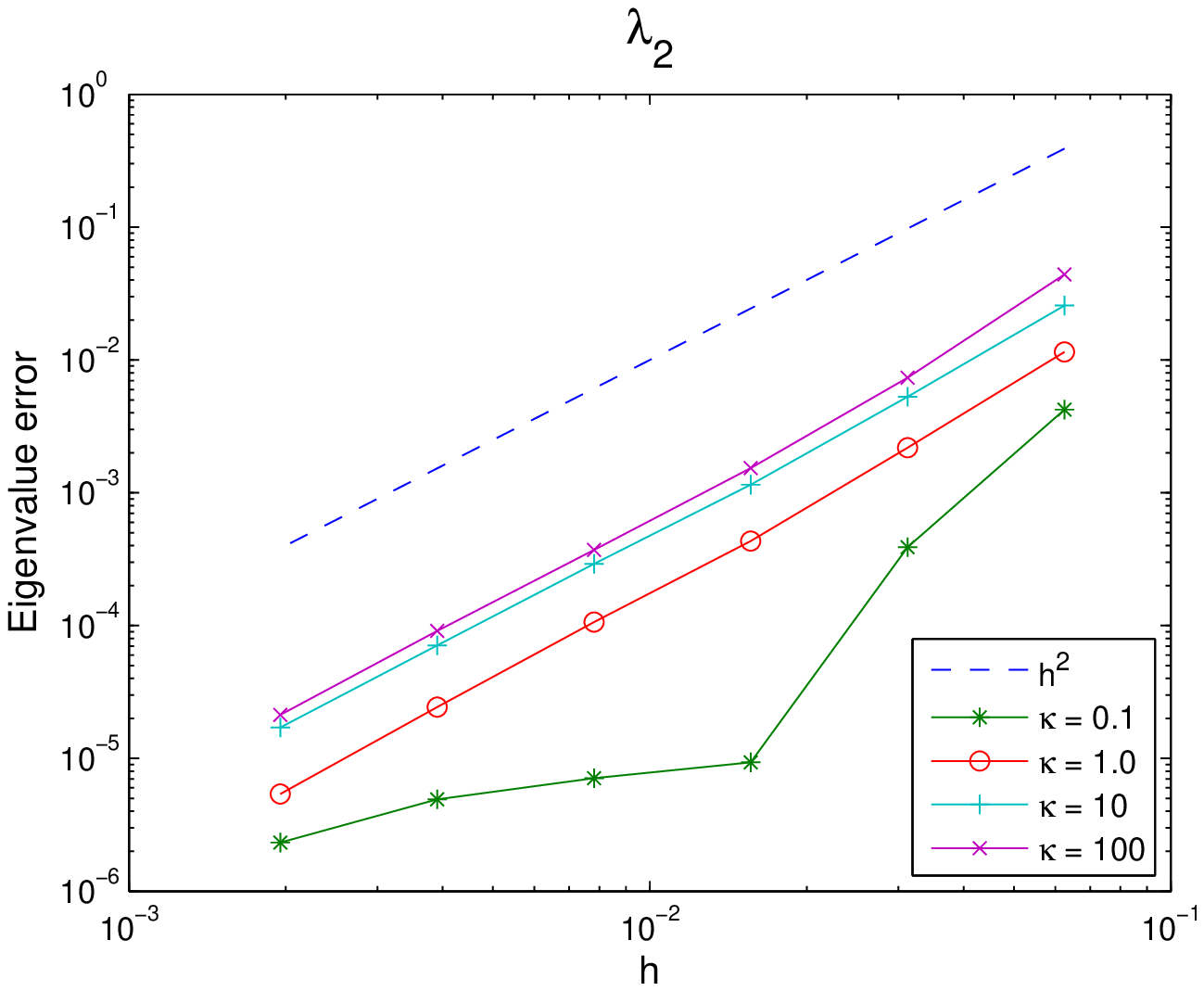}
 	 \includegraphics[width=0.48\textwidth, height=0.48\textwidth]{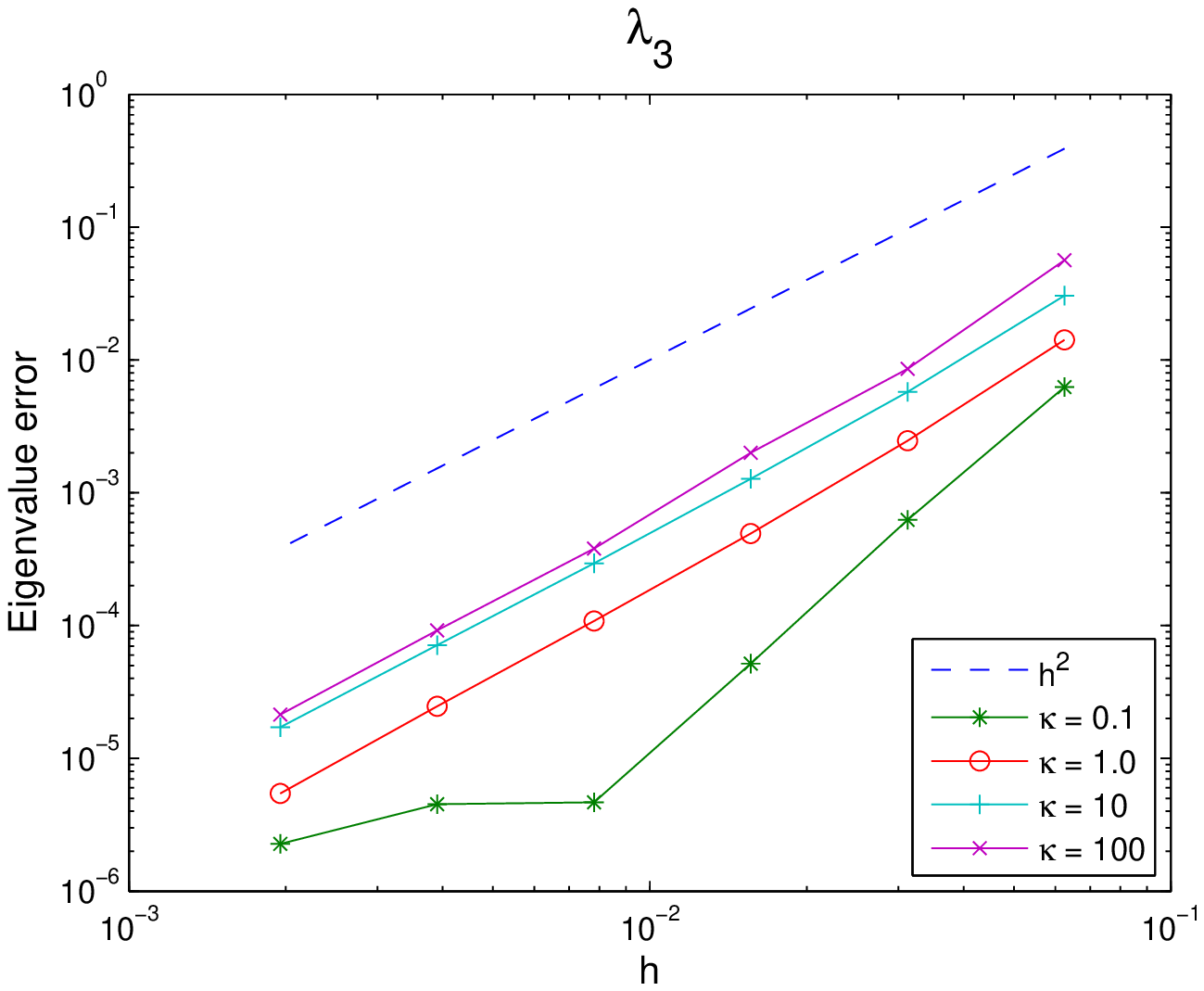}	 	
 	 \includegraphics[width=0.48\textwidth, height=0.48\textwidth]{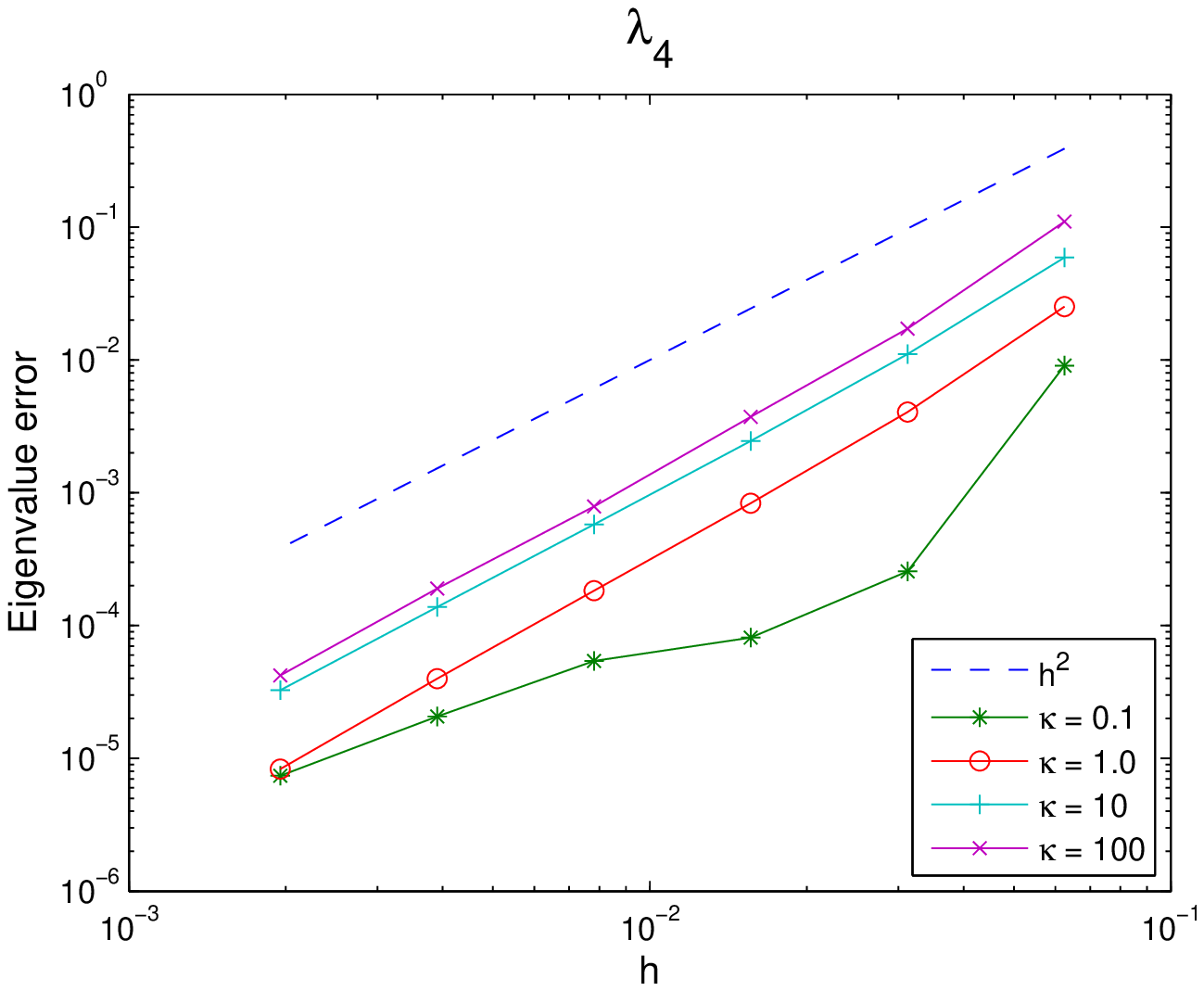}
	\caption{The log-log plot of $h$ versus the relative error of eigenvalues for $\lambda_i,\; 1\leq i \leq 4$ with $\kappa = 0.1$ (asterisk), $\kappa = 1.0$ (circle), $\kappa = 10$ (plus sign) and $\kappa = 100$ (cross). The broken line represents the convergence rate.}
	\label{fig:sigma}
\end{figure}

\begin{figure}[!ht]
	\centering
 	 \includegraphics[width=0.48\textwidth, height=0.48\textwidth]{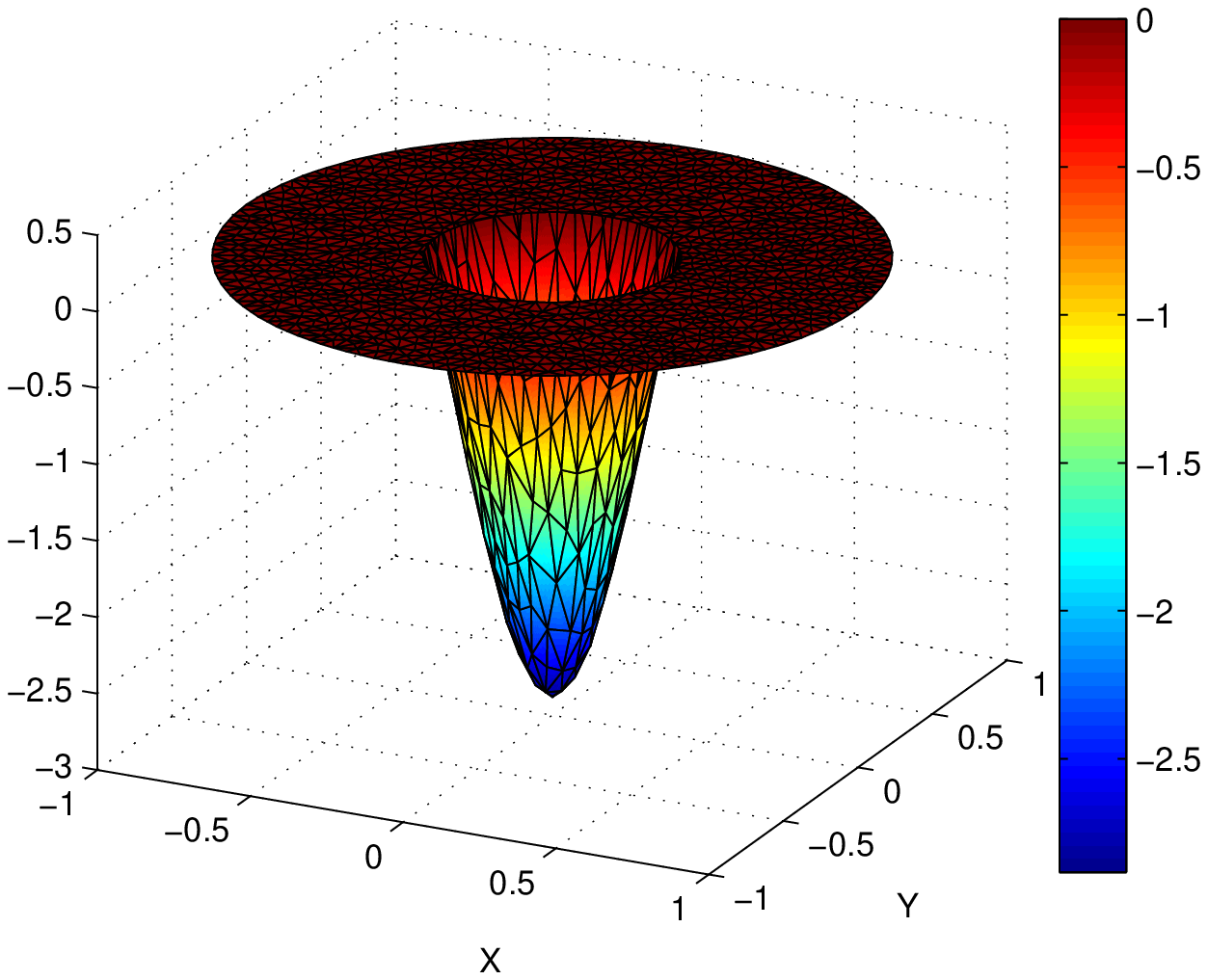}
 	 \includegraphics[width=0.48\textwidth, height=0.48\textwidth]{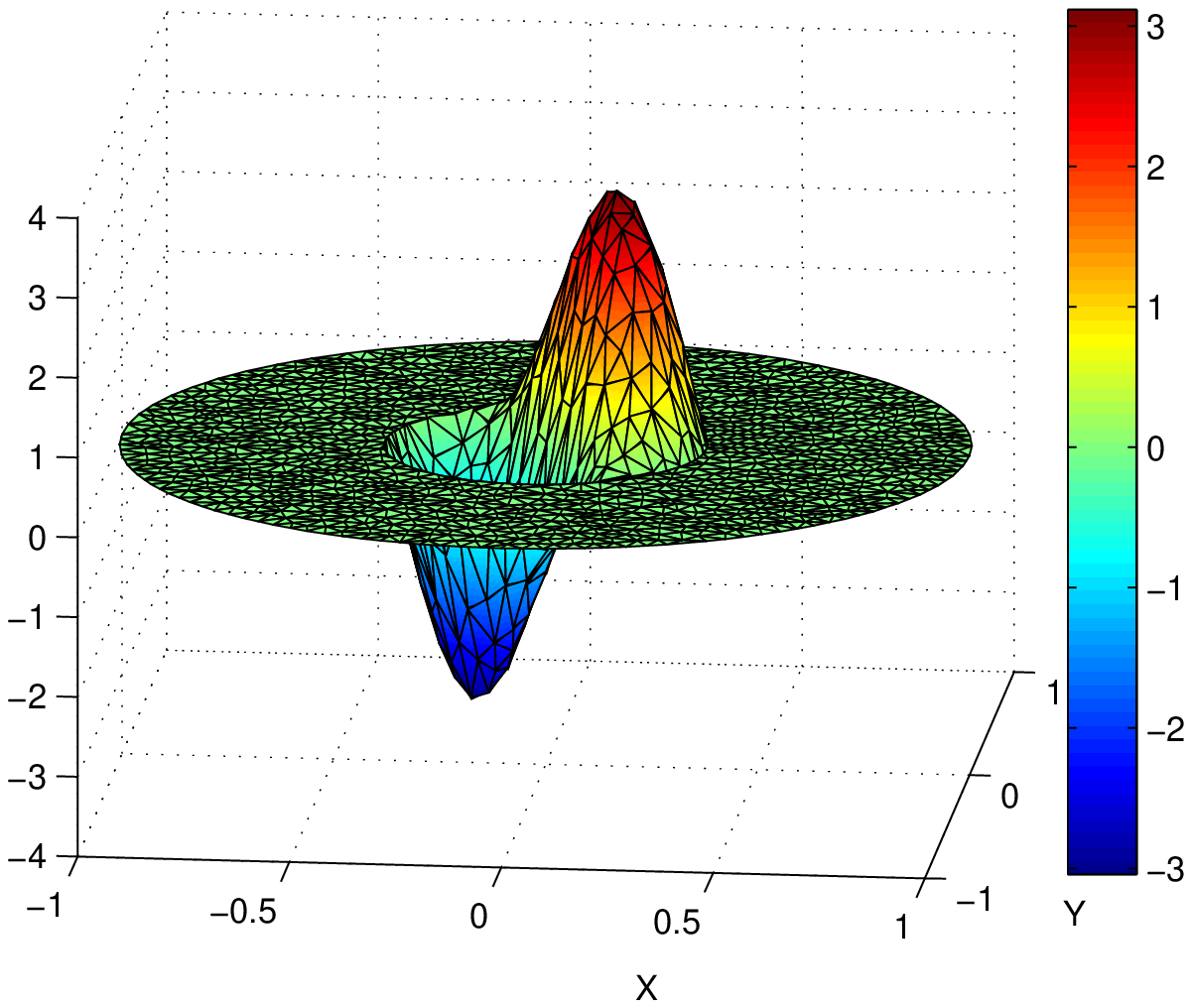}	 	
	\caption{Eigenfunctions corresponding to eigenvalues $\lambda_1$ and $\lambda_2$ in {Example 1} in the case of $(\beta^-, \beta^+)=(1,1000)$.}
	\label{fig:circle-eigV}
\end{figure}

\textbf{Example 2}. Let a computational domain be $\Omega = [-1.1]^2$ and a star-shaped interface is given by $\Gamma = \{(x,y) : \sqrt{x^2+y^2} - 0.2\sin(5\theta - \pi/5) + 0.5 =0\}$, where $\theta = \tan^{-1}(y/x)$. Our computation is performed on a uniform mesh in Figure \ref{fig:mesh2}. Since the exact eigenvalues are not available, we use the numerical results on a sufficiently refined mesh with mesh size $h = 2^{-10}$ as the reference eigenvalues for the purpose of estimating the orders.
Tables \ref{table:star1-} and \ref{table:star1+} contain errors of the eigenvalues $\lambda_h$ with various mesh size $h$ for the interface problem with the coefficient $(\beta^-, \beta^+) = (1,1000) , (1000,1)$. We display some eigenfunctions in Figure \ref{fig:star-eigV}.
\begin{figure}[!ht]
	\centering
	 \includegraphics[width=0.45\textwidth, height=0.45\textwidth]{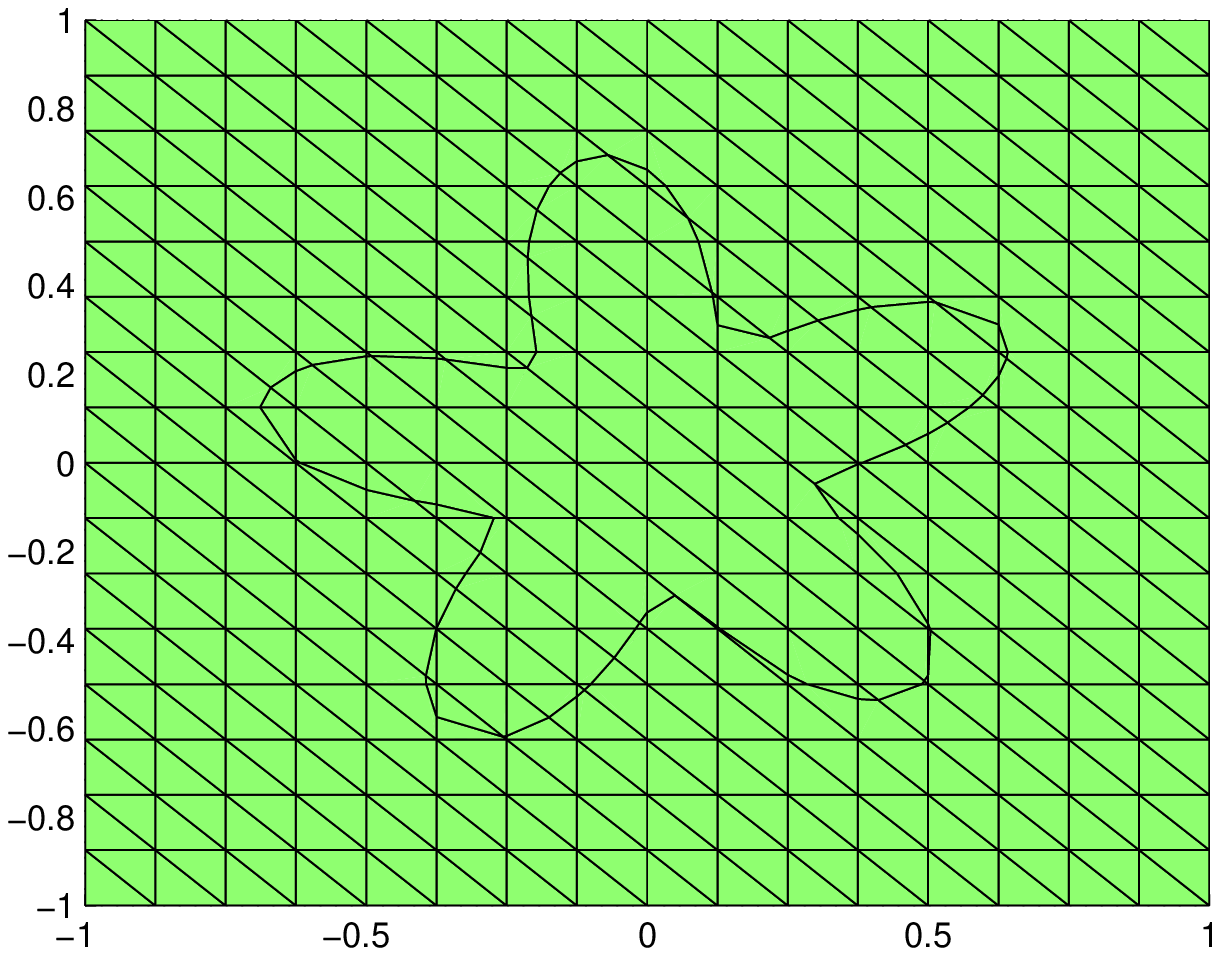}
	\includegraphics[width=0.45\textwidth, height=0.45\textwidth]{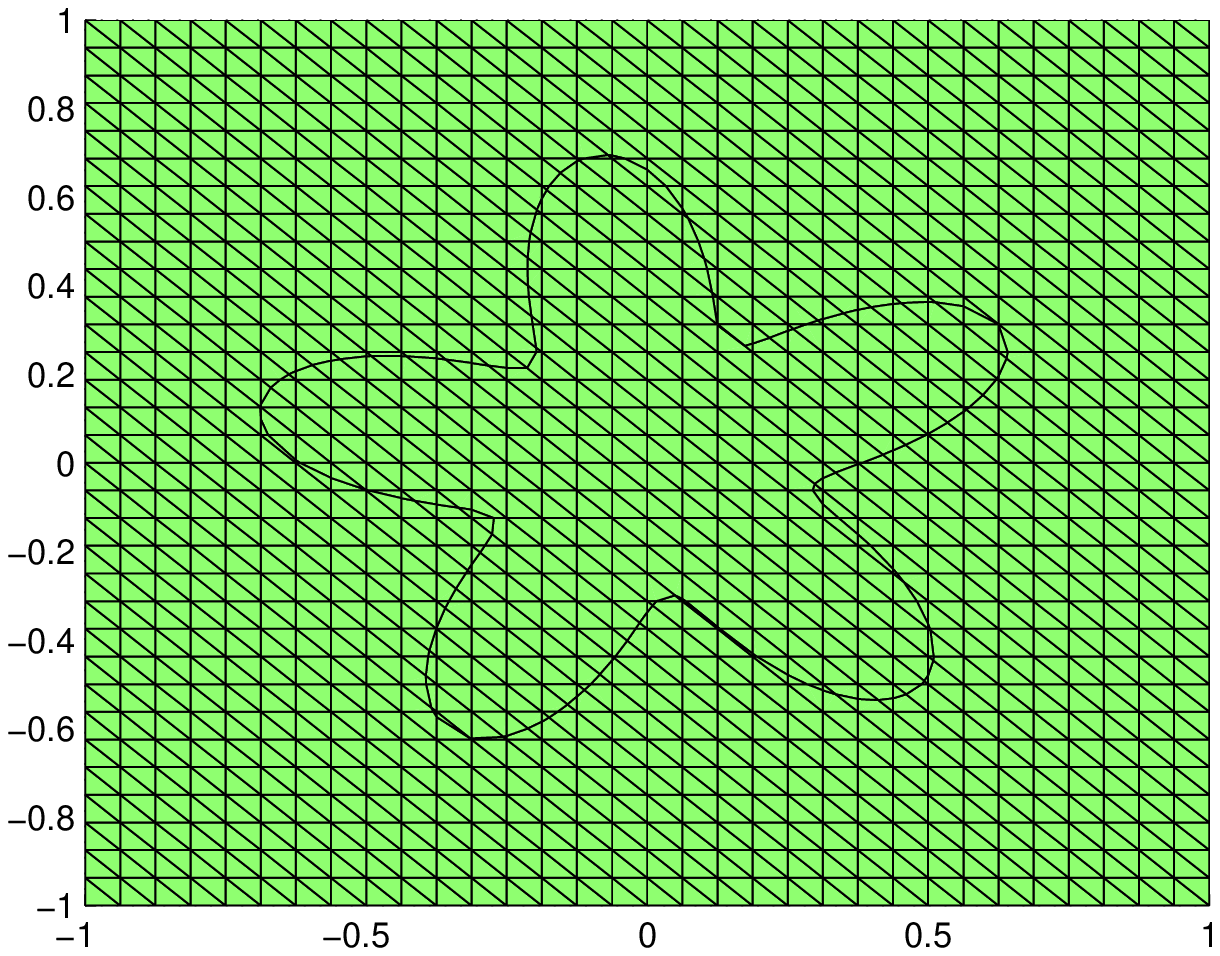}
	\caption{Star-shaped interface with $h=1/2^{3}$ and $h= 1/2^{4}$.}
	\label{fig:mesh2}
\end{figure}

\begin{table}[!ht] \footnotesize
\centering
\begin{tabular}{|c||r|r|r|r|r|} \hline
$\lambda_{ref}$	  & \multicolumn{1}{|c}{$h=1/2^{4}$(ord)} & \multicolumn{1}{|c}{$h=1/2^{5}$(ord)}  & \multicolumn{1}{|c}{$h=1/2^{6}$(ord)}  & \multicolumn{1}{|c}{$h=1/2^{7}$(ord)}  & \multicolumn{1}{|c|}{$h=1/2^{8}$(ord)} \\ \hline
43.206 &	46.794	(2.14)	&	44.313	(1.70) &	43.465	(2.09) &	43.253	(2.46) &	43.218	(1.88) 	 \\
97.442 &	103.736	(1.82)	&	99.028	(1.99) &	97.805	(2.12) &	97.531	(2.02) &	97.461	(2.17) 	 \\
97.442 &	105.890	(2.01)	&	100.338	(1.55) &	98.030	(2.30) &	97.553	(2.40) &	97.472	(1.88) 	 \\
128.947 & 139.100	(1.74)	&	131.402	(2.05) &	129.423	(2.36) &	129.061	(2.05) &	128.972	(2.15) 	 \\
128.955 & 141.697	(2.10)	&	132.611	(1.80) &	129.465	(2.84) &	129.069	(2.16) &	128.980	(2.14) 	 \\
144.481 & 153.170	(2.67)	&	146.479	(2.12) &	144.916	(2.19) &	144.583	(2.09) &	144.503	(2.17) 	 \\
172.374 & 187.286	(2.39)	&	175.982	(2.04) &	173.131	(2.25) &	172.545	(2.14) &	172.412	(2.15) 	 \\
172.374 & 190.745 (2.47)	&	176.841	(2.04) &	173.385	(2.14) &	172.564	(2.41) &	172.420	(2.04) 	 \\
219.650 & 247.605	(2.09)	&	226.494	(2.03) &	220.993	(2.35) &	219.963	(2.10) &	219.723	(2.10) 	 \\
219.652 & 248.667	(2.45)	&	227.195	(1.94) &	221.279	(2.21) &	219.977	(2.32) &	219.728	(2.09) 	 \\ \hline
\end{tabular}
\caption{First ten eigenvalues by IFEM in Figure \ref{fig:mesh2} in the case of $\beta^- = 1, \beta^+=1000$. The reference eigenvalues $\lambda_{ref}$ in the first column are computed with $h = 1/2^{10}$. The numbers in parentheses show convergence rates.}
\label{table:star1-}
\end{table}

\begin{table}[!ht] \footnotesize
\centering
\begin{tabular}{|c||r|r|r|r|r|} \hline
$\lambda_{ref}$  & \multicolumn{1}{|c}{$h=1/2^{4}$(ord)} & \multicolumn{1}{|c}{$h=1/2^{5}$(ord)}  & \multicolumn{1}{|c}{$h=1/2^{6}$(ord)}  & \multicolumn{1}{|c}{$h=1/2^{7}$(ord)}  & \multicolumn{1}{|c|}{$h=1/2^{8}$(ord)} \\ \hline 											
6.052 &	6.096	(1.91)	& 6.062	(2.14) &	6.054	(2.02) &	6.052	(2.49) &	6.052	(2.38)  \\
30.527 &	31.250	(1.79)	& 30.677	(2.26) &	30.567	(1.88) &	30.534	(2.44) &	30.528	(2.21)   \\
32.388 &	32.862	(2.09)	& 32.498	(2.11) &	32.413	(2.14) &	32.393	(2.32) &	32.389	(2.61)   \\
34.867 &	35.532	(1.80)	& 35.035	(1.98) &	34.905	(2.12) &	34.875	(2.23) &	34.868	(2.35)   \\
42.751 &	44.057	(1.61)	& 43.073	(2.02) &	42.827	(2.07) &	42.766	(2.30) &	42.754	(2.29)   \\
45.710 &	46.864	(1.61)	& 45.999	(2.00) &	45.770	(2.27) &	45.722	(2.22) &	45.712	(2.33)   \\
54.380 &	56.156	(1.82)	& 54.807	(2.05) &	54.466	(2.30) &	54.399	(2.15) &	54.384	(2.33)   \\
57.901 &	59.472	(1.87)	& 58.286	(2.03) &	57.987	(2.15) &	57.920	(2.13) &	57.904	(2.37)   \\
62.358 &	64.218	(2.00)	& 62.821	(2.00) &	62.462	(2.14) &	62.381	(2.14) &	62.363	(2.21)   \\
66.220 &	69.027	(1.58)	& 66.831	(2.20) &	66.384	(1.89) &	66.249	(2.49) &	66.226	(2.18)   \\ \hline
\end{tabular}
\caption{First ten eigenvalues by IFEM in Figure \ref{fig:mesh2} in the case of $\beta^- = 1000, \beta^+=1$. The reference eigenvalues $\lambda_{ref}$ in the first column are computed with $h = 1/2^{10}$. The numbers in parentheses show convergence rates.}
\label{table:star1+}
\end{table}

\begin{figure}[!ht]
	\centering
 	 \includegraphics[width=0.48\textwidth, height=0.48\textwidth]{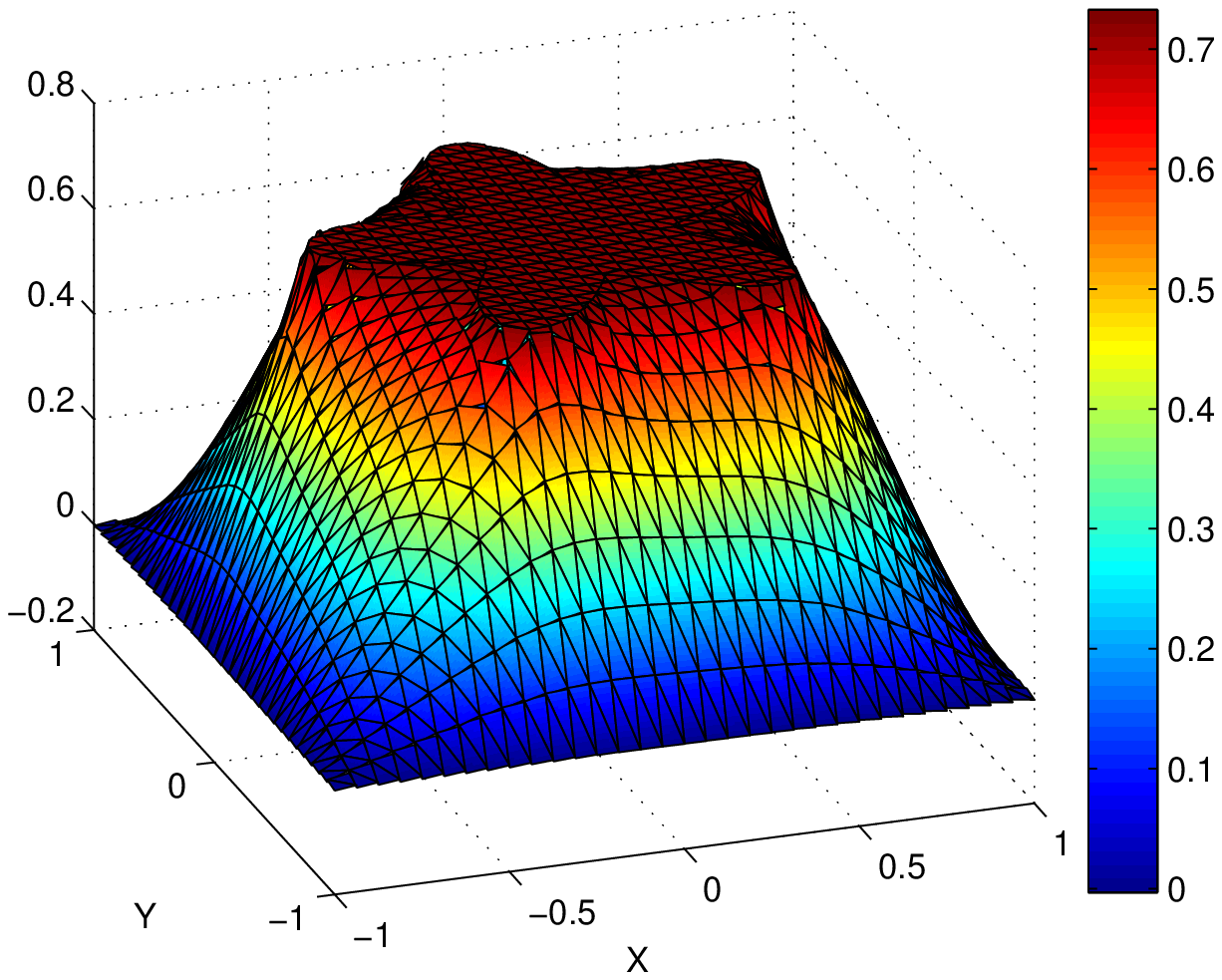}
 	  \includegraphics[width=0.48\textwidth, height=0.48\textwidth]{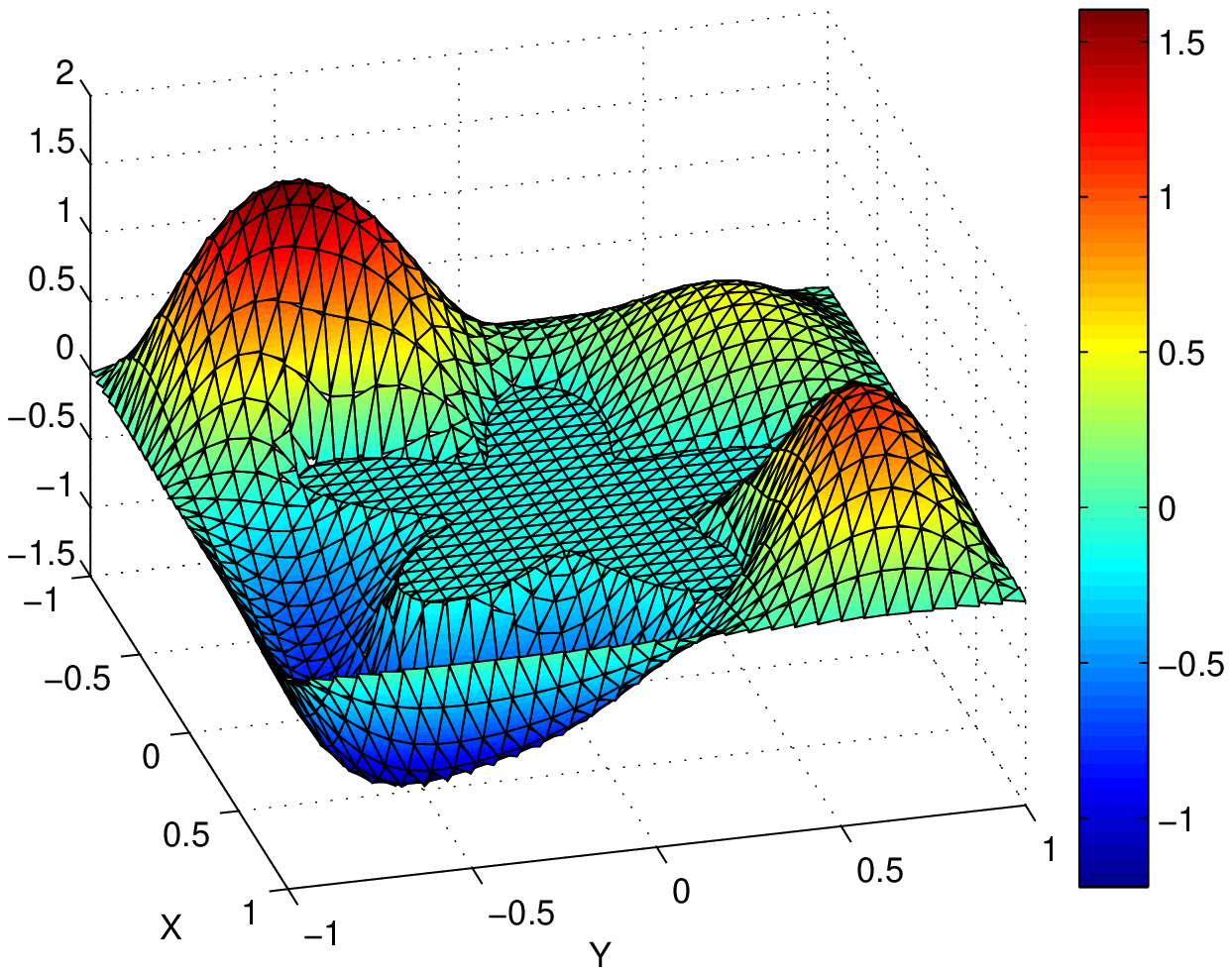}	 	
	\caption{Eigenfunctions corresponding to eigenvalues $\lambda_1$ and $\lambda_4$ in {Example 2} in the case of $(\beta^-, \beta^+)=(1000,1)$.}
	\label{fig:star-eigV}
\end{figure}

\begin{Appendix}
\section*{Appendix}
We show how the eigenvalues from Example 1 in Section 6 can be determined in an analytical way.
Recall the domain $\Omega = \{ (r,\theta) : 0 \leq r \leq R_O, 0 \leq \theta < 2\pi\}$ and the interface $\Gamma = \{(r, \theta) : r = R_I, 0 \leq \theta < 2\pi\}$. The eigenfunction $u(x,y)$ can be determined by the separation of variables, i.e. $u(x,y) = R(r)\Theta(\theta)$. The model problem (\ref{eq:modelEq}) is rewritten in polar coordinates as follows:
\begin{align} \label{eq:model1}
\frac{\partial^2 R}{\partial r^2}\Theta  + \frac{1}{r}\frac{\partial R}{\partial r}\Theta + \frac{1}{r^2} R \frac{\partial^2 \Theta}{\partial \theta^2} &= - \frac{\lambda}{\beta} R\Theta ~~~\text{in}~~ \Omega^s,\quad s=\pm, \\ 
 [R(r)]_{\Gamma} &= 0, ~~~~\left[\beta r \pd {R(r)}{r} \right]_{\Gamma} = 0,  \label{eq:JumpCond} \\
 R(r) &= 0 ~~~~~~\text{on}~~\partial \Omega. \label{eq:BoundCond}
\end{align}
A reformulation of the equation (\ref{eq:model1}) is
\begin{equation}
 \frac{r^2R^{''} + rR^{'} + \frac{\lambda}{\beta}r^2R}{R} = -\frac{\Theta^{''}}{\Theta} = m^2 ~~\text{in}~~\Omega^s,\quad s=\pm. \label{eq:model2}
\end{equation}
 The second relation in (\ref{eq:model2}) gives $\Theta(\theta) = d_1 \cos m\theta + d_2 \sin m\theta$. It also establishes that $m$ is an integer since we must have the same value at $\theta = 0$ and $\theta = 2\pi$.  The first relation in (\ref{eq:model2}) is the Bessel equation
$$r^2 R^{''} + r R^{'} +\left(\frac{\lambda}{\beta} r^2 - m^2\right) R = 0 ~~\text{in}~~\Omega^s, \quad s=\pm.$$
Recall that $\beta$ and $\lambda$ are positive by the properties of the model problem (\ref{eq:modelEq}).
  We obtain $R(r)$ as follows:
 \begin{equation*}
 R(r) =  \left\{\begin{array}{l l} c_1^{+}J_{ m  }\left(\sqrt{\frac{\lambda}{\beta^{+}}}r\right) + c_2^{+}Y_{ m  }\left(\sqrt{\frac{\lambda}{\beta^{+}}}r\right), & \text{in}~\Omega^+, \\
 c_1^{-}J_{ m  }\left(\sqrt{\frac{\lambda}{\beta^{-}}}r\right) + c_2^{-}Y_{ m  }\left(\sqrt{\frac{\lambda}{\beta^{-}}}r\right), & \text{in}~\Omega^-,
 \end{array}\right.
 \end{equation*}
  where $J_{m}$ and $Y_{m}$ are the Bessel functions of the first and second kind of order $m$.
  Since $J_{m}$ is analytic and $Y_{ m  }$ is singular at the origin, we have $c_2^{-} = 0$.
  The coefficients $c_1^+, c_2^+, c_1^-$ are determined by (\ref{eq:JumpCond}) and (\ref{eq:BoundCond}). The condition (\ref{eq:BoundCond}) leads to the equation
 \begin{equation}
 c_1^{+}J_{ m  }\left(\sqrt{\frac{\lambda}{\beta^{+}}}R_O\right) + c_2^{+}Y_{ m  }\left(\sqrt{\frac{\lambda}{\beta^{+}}}R_O\right) = 0 .
 \label{eq:bd1}
 \end{equation}
By using the first relation  of (\ref{eq:JumpCond}), we obtain the equation
\begin{equation}
c_1^{+}J_{ m  }\left(\sqrt{\frac{\lambda}{\beta^{+}}}R_I\right) + c_2^{+}Y_{ m  }\left(\sqrt{\frac{\lambda}{\beta^{+}}}R_I\right) = c_1^{-}J_{ m  }\left(\sqrt{\frac{\lambda}{\beta^{-}}}R_I\right) .
\label{eq:bd2}
\end{equation}
The second part of (\ref{eq:JumpCond}) gives
\begin{eqnarray}
 \, &&\beta^{+}\left(c_1^{+}\frac{d}{d r}\left(J_{ m  }\left(\sqrt{\frac{\lambda}{\beta^{+}}}r\right)\right) + c_2^{+}\frac{d}{d r}\left(Y_{ m  }\left(\sqrt{\frac{\lambda}{\beta^{+}}}r\right)\right)\right) \nonumber\\
 = \, && \beta^{-}c_1^{-}\frac{d}{d r}\left(J_{ m  }\left(\sqrt{\frac{\lambda}{\beta^{-}}}r\right)\right)\;\;\;\;\; \text{on}\;\;\;r = R_I.
\label{eq:bd3}
\end{eqnarray}
From the equations (\ref{eq:bd1}),(\ref{eq:bd2}), and (\ref{eq:bd3}), we have a homogeneous matrix equation
\begin{equation}
A \mathbf{c} = {0},
\label{eq:Det}
\end{equation}
where
\begin{equation*}
A = \left[
\begin{smallmatrix}
J_m\left(\sqrt{\frac{\lambda}{\beta^{+}}}R_O\right) & Y_{ m  }\left(\sqrt{\frac{\lambda}{\beta^{+}}}R_O\right) & \qquad 0 \\
J_{ m  }\left(\sqrt{\frac{\lambda}{\beta^{+}}}R_I\right) & Y_{ m  }\left(\sqrt{\frac{\lambda}{\beta^{+}}}R_I\right) & -J_{ m  }\left(\sqrt{\frac{\lambda}{\beta^{-}}}R_I\right) \\
\beta^{+}\frac{d}{d r}\left(J_{ m  }\left(\sqrt{\frac{\lambda}{\beta^{+}}}r\right)\right)|_{r=R_I} & \beta^{+}\frac{d}{d r}\left(Y_{ m  }\left(\sqrt{\frac{\lambda}{\beta^{+}}}r\right)\right)|_{r=R_I} & -\beta^{-}\frac{d}{d r}\left(J_{ m  }\left(\sqrt{\frac{\lambda}{\beta^{-}}}r\right)\right)|_{r=R_I}
\end{smallmatrix}
\right]
\end{equation*}
and $\mathbf c = [c_1^{+},  c_2^{+},  c_1^{-}]^{T}$. A nonzero solution of (\ref{eq:Det}) exists when the determinant of the matrix $A$ is zero.  For each index $m$, the eigenvalues $\lambda$ from (\ref{eq:model1}) coincide with the roots of the determinant of the matrix $A$, which can be easily calculated by any root-finding method such as the bisection method.
\end{Appendix}


\begin{thebibliography}{10}

\bibitem{Adams-Fournier}
{\sc R.~A.~Adams and J.~J.~F.~Fournier}, {\em Sobolev Spaces},
2nd ed., Elsevier, Amsterdam, 2003.

\bibitem{Antonietti-Buffa-Perugia}
{\sc P.~F.~Antonietti, A.~Buffa, and I.~Perugia}, {\em Discontinuous Galerkin approximation of the Laplace eigenproblem},
Comput. Methods Appl. Mech. Engrg. 195 (2006), pp. 3483--3503.

\bibitem{Arnold-Brezzi}
{\sc D.~N.~Arnold and F.~Brezzi}, {\em Mixed and nonconforming finite element methods: implementation, postprocessing and error estimates},
RAIRO Mod\'{e}l. Math. Anal. Num\'{e}r. 19 (1985), pp. 7--32.


\bibitem{Babuska-Osborn}
{\sc I.~Babu\v{s}ka and J.~E.~Osborn}, {\em Eigenvalue problems},
Handb. Numer. Anal. II, North-Holland, Amsterdam, 1991.

\bibitem{Badia-Codina}
{\sc S.~Badia and R.~Codina}, {\em A combined nodal continuous-discontinuous finite element formulation for the Maxwell problem},
Appl. Math. Comput. 218 (2011), pp. 4276--4294.

\bibitem{Boffi2007}
{\sc D.~Boffi}, {\em Approximation of eigenvalues in mixed form, discrete compactness property, and application to hp mixed finite elements},
Comput. Methods Appl. Mech. Engrg. 196 (2007), pp. 3672--3681.

\bibitem{Boffi-Brezi-Gastaldi}
{\sc D.~Boffi, F.~Brezzi, and L.~Gastaldi}, {\em On the problem of spurious eigenvalues in the approximation of linear elliptic problems in mixed form},
 Math. Comp. 69 (2000), pp. 121--140.

\bibitem{Bramble-King}
{\sc J.~H.~Bramble and J.~T.~King},{ \em A finite element method for interface problems in domains with smooth boundaries and interfaces},
Adv. Comput. Math. 6 (1996), pp. 109--138.

\bibitem{Brenner-Scott}
{\sc S.~C.~Brenner and L.~R.~Scott}, {\em The mathematical theory of finite element methods},
3rd ed., Texts Appl. Math. 15, Springer, New York, 2008.

\bibitem{Chang-Kwak}
{\sc K.~S.~Chang and D.~Y.~Kwak}, {\em Discontinuous bubble scheme for elliptic problems with jumps in the solution},
Comput. Methods Appl. Mech. Engrg. 200 (2011), pp. 494--508.

\bibitem{Chou-Kwak-Wee}
{\sc S.~H.~Chou, D.~Y.~Kwak, and K.~T.~Wee}, {\em Optimal convergence analysis of an immersed interface finite element method},
Adv. Comput. Math. 33 (2010), pp. 149--168.

\bibitem{Conway}
{\sc J.~B.~Conway}, {\em A course in functional analysis},
2nd ed., Springer-Verlag, Berlin, 1990.

\bibitem{Crouzeix-Raviart}
{\sc M.~Crouzeix and P.~A.~Raviart}, {\em Conforming and nonconforming finite element methods for solving the stationary Stokes equations I},
Rev. Fr. Autom. Inf. Rech. Oper. 7 (1973), pp. 33--75.

\bibitem{Dari-Duran-Padra}
{\sc E.~A.~Dari, R.~G.~Dur\'{a}n, and C.~Padra},
{\em A posteriori error estimates for non-conforming approximation of eigenvalue problems}, Appl. Numer. Math. 62 (2012), pp. 580--591.

\bibitem{Deak-Ahmed}
{\sc B.~Deka and T.~Ahmed}, {\em Convergence of finite element method for linear second-order wave equations with discontinuous coefficients},
Numer. Methods Partial Differential Equations 29 (2013), pp. 1522--1542.

\bibitem{Descloux-Nassif-Rappaz1978-1}
{\sc J.~Descloux, N.~Nassif, and J.~Rappaz}, {\em On spectral approximation. I. The problem of convergence},
RAIRO Anal. Num\'{e}r. 12 (1978), pp. 97--112.

\bibitem{Descloux-Nassif-Rappaz1978-2}
{\sc J.~Descloux, N.~Nassif, and J.~Rappaz}, {\em On spectral approximation. II. Error estimates for the Galerkin method},
RAIRO Anal. Num\'{e}r. 12 (1978), pp. 113--119.

\bibitem{Giani-Graham}
{\sc S.~Giani and I.~G.~Graham}, {\em A convergent adaptive method for elliptic eigenvalue problems},
SIAM J. Numer. Anal. 47 (2009), pp. 1067--1091.

\bibitem{Gong-Li-Li}
{\sc Y.~Gong, B.~Li, and Z.~Li}, {\em Immersed-interface finite-element methods for elliptic interface problems with nonhomogeneous jump conditions},
SIAM J. Numer. Anal. 46 (2008), pp. 472--495.

\bibitem{Hiptmair-Li-Zou}
{\sc R.~Hiptmair, J.~Li, and J.~Zou}, {\em Convergence analysis of finite element methods for $H(curl; \Omega)$-elliptic interface problems},
Numer. Math. 122 (2012), pp. 557--578.

\bibitem{Hou-Liu}
{\sc S.~Hou and X.-D.~Liu}, {\em A numerical method for solving variable coefficient elliptic equation with interfaces},
 J. Comput. Phys. 202 (2005), pp. 411--445.

\bibitem{Kato}
{\sc T.~Kato}, {\em Perturbation theory for linear operators},
Classics in Mathematics, Springer-Verlag, Berlin, 1995.

\bibitem{Kwak-W-C}
{\sc D.~Y.~Kwak, K.~T.~Wee, and K.~S.~Chang},
{\em An analysis of a broken $P_1$-nonconforming finite element method for interface problems}, SIAM J. Numer. Anal. 48 (2010), pp. 2117--2134.

\bibitem{Ladyzenskaja-Rivkind-Uralceva}
{\sc O.~A.~Lady\v{z}enskaja, V.~J.~Rivkind, and N.~N.~Ural'ceva}, {\em Solvability of diffraction problems in the classical sense},
 Trudy Mat. Inst. Steklov. 92 (1966), pp. 116--146.


\bibitem{Larson}
{\sc M.~G.~Larson}, {\em A posteriori and a priori error analysis for finite element approximations of self-adjoint elliptic eigenvalue problems},
 SIAM J. Numer. Anal. 38 (2000), pp. 608--625.

\bibitem{Lehoucq-Sorensen-Yang}
{\sc R.~B.~Lehoucq, D.~C.~Sorensen, and C.~Yang}, {\em ARPACK users' guide: solution of large-scale eigenvalue problems with implicitly restarted Arnoldi methods},
SIAM, Philadelphia, 1998.

\bibitem{LeVeque-Li}
{\sc R.~J.~Leveque and Z.~Li}, {\em The immersed interface method for elliptic equations with discontinuous coefficients and singular sources},
 SIAM J. Numer. Anal. 31 (1994), pp. 1019--1044.

\bibitem{Li-Lin-Lin-Rogers}
{\sc Z.~Li, T.~Lin, Y.~Lin, and R.~C.~Rogers}, {\em An immersed finite element space and its approximation capability},
 Numer. Methods Partial Differential Equations 20 (2004), pp. 338--367.

\bibitem{Li-Lin-Wu}
{\sc Z.~Li, T.~Lin, and X.~Wu}, {\em New Cartesian grid methods for interface problems using the finite element formulation},
 Numer. Math. 96 (2003), pp. 61--98.


\bibitem{Lin-Sheen-Zhang}
{\sc T.~Lin, D.~Sheen, and X.~Zhang}, {\em A locking-free immersed finite element method for planar elasticity interface problems},
J. Comput. Phys. 247 (2013), pp. 228--247.


\bibitem{Mercier-Osborn-Rappaz-Raviart}
{\sc B.~Mercier, J.~E.~Osborn, J.~Rappaz, and P.~A.~Raviart}, {\em Eigenvalue approximation by mixed and hybrid methods}, Math. Comp. 36 (1981), pp. 427--453.

\bibitem{Osborn}
{\sc J.~E.~Osborn}, {\em Spectral approximation for compact operators}, Math. Comput. 29 (1975), pp. 712--725.

\bibitem{Zhang-Leveque}
{\sc C.~Zhang and R.~J.~Leveque}, {\em The immersed interface method for acoustic wave equations with discontinuous coefficients},
Wave Motion 25 (1997), pp. 237--263.

\end{thebibliography}
\end{document}